\newtheorem{thm}{Theorem}[section]
 \newtheorem{cor}[thm]{Corollary}
 \newtheorem{lem}[thm]{Lemma}
 \theoremstyle{definition}
 \theoremstyle{remark}
 \numberwithin{equation}{section}
\begin{document}

\title[Perturbation of singular vectors]
 {Perturbation of linear forms of singular\\ vectors under Gaussian noise}
\author[Koltchinskii]{Vladimir Koltchinskii}

\address{%
School of Mathematics\\
Georgia Institute of Technology\\
Atlanta, GA 30332\\
USA}

\email{vlad@math.gatech.edu}

\thanks{The research of Vladimir Koltchinskii was supported in part by NSF Grants DMS-1207808 and CCF-1415498.
Dong Xia's research was partially supported by NSF Grant DMS-1207808.}

\author[Xia]{Dong Xia}
\address{School of Mathematics\\
Georgia Institute of Technology\\
Atlanta, GA 30332\br
USA}
\email{dxia7@math.gatech.edu}

\subjclass{Primary 15B52; Secondary 15A18, 47A55}

\keywords{Random matrix, singular vector, perturbation, Gaussian noise.}

\date{January 1, 2004}

\begin{abstract}
Let $A\in\mathbb{R}^{m\times n}$ be a matrix of rank $r$ with singular value decomposition (SVD) $A=\sum_{k=1}^r\sigma_k (u_k\otimes v_k),$ 
where $\{\sigma_k, k=1,\ldots,r\}$ are singular values of $A$ (arranged in a non-increasing order) and 
$u_k\in {\mathbb R}^m, v_k\in {\mathbb R}^n, k=1,\ldots, r$ are the corresponding 
left and right orthonormal singular vectors. 
Let $\tilde{A}=A+X$ be a noisy observation of $A,$ where $X\in\mathbb{R}^{m\times n}$ is a random matrix with i.i.d. Gaussian entries, 
$X_{ij}\sim\mathcal{N}(0,\tau^2),$ and consider its SVD $\tilde{A}=\sum_{k=1}^{m\wedge n}\tilde{\sigma}_k(\tilde{u}_k\otimes\tilde{v}_k)$
with singular values $\tilde{\sigma}_1\geq\ldots\geq\tilde{\sigma}_{m\wedge n}$ and singular vectors $\tilde{u}_k,\tilde{v}_k,k=1,\ldots, m\wedge n.$

The goal of this paper is to develop sharp concentration bounds for linear forms $\langle \tilde u_k,x\rangle, x\in {\mathbb R}^m$ and 
$\langle \tilde v_k,y\rangle, y\in {\mathbb R}^n$ of the perturbed (empirical) singular vectors in the case when the singular values of $A$ are 
distinct and, more generally, concentration bounds for bilinear forms of projection operators associated with SVD.
In particular, the results imply upper bounds of the order $O\biggl(\sqrt{\frac{\log(m+n)}{m\vee n}}\biggr)$ (holding with a high probability) on 
$$\max_{1\leq i\leq m}\big|\big<\tilde{u}_k-\sqrt{1+b_k}u_k,e_i^m\big>\big|\ \ 
{\rm and} 
\ \ 
\max_{1\leq j\leq n}\big|\big<\tilde{v}_k-\sqrt{1+b_k}v_k,e_j^n\big>\big|,$$ 
where $b_k$ are properly chosen constants characterizing the bias
of empirical singular vectors $\tilde u_k, \tilde v_k$ 
and $\{e_i^m,i=1,\ldots,m\}, \{e_j^n,j=1,\ldots,n\}$ are the canonical bases of $\mathbb{R}^m, {\mathbb R}^n,$ respectively. 
\end{abstract}
\maketitle
\section{Introduction and main results}\label{introsec}

Analysis of perturbations of singular vectors of matrices under a random noise is of importance in a variety of areas including, for instance,  
digital signal processing, numerical linear algebra and spectral based methods of community detection in large networks 
(see \cite{kannan2009spectral}, \cite{stewart1998perturbation}, \cite{eisenstat1994relative}, \cite{li1998relative}, 
\cite{rohe2011spectral}, \cite{lei2014consistency}, \cite{jin2015fast}, \cite{huang2009spectral} and references therein).
Recently, random perturbations of singular vectors have been 
studied in Vu~\cite{vu2011singular}, Wang~\cite{wang2012singular}, 
O'Rourke et al.~\cite{o2013random}, Benaych-Georges and Nadakuditi~\cite{benaych2012singular}. 
However, up to our best knowledge, this paper proposes first sharp results concerning concentration 
of {\it the components} of singular vectors of randomly perturbed matrices. 
At the same time, there has been interest in the recent literature in so called ``delocalization'' properties of eigenvectors of random matrices, 
see Vershynin~\cite{rudelson2013delocalization}, Vu and Wang~\cite{vu2014random} and references therein. In this case, 
the ``information matrix'' $A$ is equal to zero, $\tilde A=X$ and, under certain regularity conditions, 
it is proved that the magnitudes of the components for the eigenvectors of $X$ (in the case of symmetric square matrix)
are of the order $O\big(\frac{\log(n)}{\sqrt{n}}\big)$ with a high probability. This is somewhat similar to the results 
on ``componentwise concentration'' of singular vectors of $\tilde A=A+X$ proved in this paper, but the analysis in the 
case when $A\neq 0$ is quite different (it relies on perturbation theory and on the condition that the gaps between 
the singular values are sufficiently large).\\

Later in this section, we provide a formal description of the problem studied in the current paper.
Before this, we introduce the notations that will be used throughout the paper. 
For nonnegative $K_1,K_2$, the notation $K_1\lesssim K_2$ (equivalently, $K_2 \gtrsim K_1$) means that there exists an absolute constant $C>0$ such that $K_1\leq CK_2;$
$K_1\asymp K_2$ is equivalent to $K_1\lesssim K_2$ and $K_2\lesssim K_1$ simultaneously. 
In the case when the constant $C$ might depend on $\gamma,$  we provide these symbols with subscript $\gamma:$ say, $K_1\lesssim_{\gamma} K_2.$ There will be many constants involved in the arguments that may evolve from line to line.

In what follows, $\langle \cdot, \cdot \rangle$ denotes the inner product of finite-dimensional Euclidean spaces. 
 For $N\geq 1,$ $e^N_j, j=1,\dots, N$ denotes the canonical basis of the space $\mathbb{R}^N.$
If $P$ is the orthogonal projector onto a subspace $L\subset\mathbb{R}^{N},$ then $P^{\perp}$ denotes the projector onto the orthogonal complement $L^{\perp}.$
With a minor abuse of notation, $\|\cdot\|$
denotes both the $l_2$-norm of vectors in finite-dimensional spaces and the operator norm of matrices (i.e., their largest singular value). 
The Hilbert-Schmidt norm of matrices is denoted by $\|\cdot\|_2$. 
Finally, $\|\cdot\|_{\infty}$ is adopted for the $l_{\infty}$-norm of vectors.

In what follows, $A'\in {\mathbb R}^{n\times m}$ denotes the  transpose of a matrix $A\in {\mathbb R}^{m\times n}.$
The following mapping $\Lambda: {\mathbb R}^{m\times n}\mapsto {\mathbb R}^{(m+n)\times (m+n)}$ will 
be frequently used:
$$
\Lambda(A):=\Big(\begin{array}{cc}0&A\\A'&0\end{array}\Big), A\in {\mathbb R}^{m\times n}.
$$
Note that the image $\Lambda(A)$ is a symmetric $(m+n)\times (m+n)$ matrix.

Vectors $u\in {\mathbb R}^m, v\in {\mathbb R}^n,$ etc. will be viewed as column vectors (or $m\times 1, n\times 1,$ etc matrices). 
For $u\in {\mathbb R}^m, v\in {\mathbb R}^n,$ denote by $u\otimes v$ the matrix $uv'\in {\mathbb R}^{m\times n}.$ 
In other words, $u\otimes v$ can be viewed as a linear transformation from ${\mathbb R}^n$ into ${\mathbb R}^m$ 
defined as follows: $(u\otimes v)x = u\langle v,x\rangle, x\in {\mathbb R}^n.$ 

Let $A\in\mathbb{R}^{m\times n}$ be an $m\times n$ matrix 
and let 
$$A=\sum_{i=1}^{m\wedge n}\sigma_i (u_i\otimes v_i)$$ 
be its singular value decomposition (SVD) with singular values $\sigma_1\geq\ldots\geq\sigma_{m\wedge n}\geq 0,$ orthonormal left singular vectors $u_1,\dots, u_{m\wedge n}\in {\mathbb R}^m$ and orthonormal 
right singular vectors $v_1,\dots, v_{m\wedge n}\in {\mathbb R}^n.$ 
If $A$ is of rank $\text{rank}(A)=r\leq m\wedge n,$ then $\sigma_i=0, i>r$ and the SVD can be written as 
$A=\sum_{i=1}^{r}\sigma_i (u_i\otimes v_i).$ 
Note that in the case when there are repeated singular values $\sigma_i,$ the singular vectors are not 
unique. In this case, let $\mu_1>\dots \mu_d>0$ with $d\leq r$ be distinct singular values of $A$ arranged 
in decreasing order and denote $\Delta_k:=\{i: \sigma_i=\mu_k\}, k=1,\dots, d.$ Let $\nu_k:={\rm card}(\Delta_k)$
be the multiplicity of $\mu_k, k=1,\dots, d.$ Denote 
\begin{equation*}
\begin{split}
P_k^{uv}:=\sum_{i\in \Delta_k} (u_i\otimes v_i),&\  P_k^{vu}:=\sum_{i\in \Delta_k} (v_i\otimes u_i),\\
P_k^{uu}:=\sum_{i\in \Delta_k} (u_i\otimes u_i),&\  P_k^{vv}:=\sum_{i\in \Delta_k} (v_i\otimes v_i).
\end{split}
\end{equation*}
It is straightforward to check that the following relationships hold:
\begin{equation}
\label{proj_uv}
(P_k^{uu})'=P_k^{uu},\  (P_k^{uu})^2= P_k^{uu},\ P_k^{vu}=(P_k^{uv})',\ P_k^{uv}P_k^{vu}=P_k^{uu}. 
\end{equation}
This implies, in particular, that the operators $P_k^{uu}, P_k^{vv}$ are orthogonal projectors 
(in the spaces ${\mathbb R}^m, {\mathbb R}^n,$ respectively). It is also easy to check 
that 
\begin{equation}
\label{proj_uv_1}
P_k^{uu}P_{k'}^{uu}=0,\ 
P_k^{vv}P_{k'}^{vv}=0,\ 
P_k^{vu}P_{k'}^{uv}=0,\ 
P_k^{uv}P_{k'}^{vu}=0,\ k\neq k'.
\end{equation}

The SVD of matrix $A$ 
can be rewritten as $A=\sum_{k=1}^d \mu_k P_k^{uv}$ and it can be shown that the operators $P_k^{uv}, k=1,\dots, d$
are uniquely defined. Let  
$$
B=\Lambda(A)=\Big(\begin{array}{cc}0&A\\A'&0\end{array}\Big)
= \sum_{k=1}^d \mu_k \Big(\begin{array}{cc}0&P_k^{uv}\\P_k^{vu}&0\end{array}\Big).
$$
For $k=1,\dots, d,$ denote 
$$
P_k := \frac{1}{2}\Big(\begin{array}{cc}P_k^{uu}&P_k^{uv}\\P_k^{vu}&P_k^{vv}\end{array}\Big),\ 
P_{-k}:=\frac{1}{2}\Big(\begin{array}{cc}P_k^{uu}&-P_k^{uv}\\-P_k^{vu}&P_k^{vv}\end{array}\Big),$$
and also 
$$
\mu_{-k}:=-\mu_k.
$$
Using relationships (\ref{proj_uv}), (\ref{proj_uv_1}), it is easy to show that $P_k P_{k'}=P_{k'}P_{k}=\mathbbm{1}(k=k')P_k$
for all $k,k', 1\leq |k|\leq d, 1\leq |k'|\leq d.$  Since the operators $P_k:{\mathbb R}^{m+n}\mapsto {\mathbb R}^{m+n}, 1\leq |k|\leq d$ 
are also symmetric, they are orthogonal projectors onto mutually orthogonal subspaces of ${\mathbb R}^{m+n}.$
Note that, by a simple algebra, 
$
B=\sum_{1\leq |k|\leq d} \mu_k P_k,  
$
implying that $\mu_k$ are distinct eigenvalues of $B$ and $P_k$ are the corresponding eigenprojectors.  
Note also that if $2\sum_{k=1}^d\nu_k<m+n,$ then zero is also an eigenvalue of $B$ (that will be denoted by $\mu_0$) of multiplicity 
$\nu_0:=n+m-2\sum_{k=1}^d\nu_k.$
Representation $A\mapsto B=\Lambda(A)=\Big(\begin{array}{cc}0&A\\A'&0\end{array}\Big)$ will play a crucial role 
in what follows since it allows to reduce the analysis of SVD for matrix $A$ to the spectral representation 
$B=\sum_{1\leq |k|\leq d} \mu_k P_k$. In particular, the operators $P_k^{uv}$ involved 
in the SVD $A=\sum_{k=1}^d \mu_k P_k^{uv}$ can be recovered from the eigenprojectors $P_k$ 
of matrix $B$ (hence, they are uniquely defined). Define also 
$\theta_i:= \frac{1}{\sqrt{2}}\Big(\begin{array}{cc}u_i\\v_i\end{array}\Big)$ and 
$\theta_{-i}:= \frac{1}{\sqrt{2}}\Big(\begin{array}{cc}u_i\\-v_i\end{array}\Big)$ for $i=1,\dots, r$
and let $\Delta_{-k}:=\{-i: i\in \Delta_k\}, k=1,\dots, d.$ Then, $\theta_i, 1\leq |i|\leq r$ are orthonormal 
eigenvectors of $B$ (not necessarily uniquely defined) corresponding to its non-zero eigenvalues $\sigma_1\geq \dots \geq \sigma_r >0>\sigma_{-r}\geq \dots \geq \sigma_{-1}$
with $\sigma_{-i}=-\sigma_i$ and
$$P_k=\sum_{i\in \Delta_k}(\theta_i\otimes \theta_i), 1\leq |k|\leq d.$$ 

It will be assumed in what follows that $A$ is perturbed by a random matrix $X\in\mathbb{R}^{m\times n}$
with i.i.d. entries $X_{ij}\sim\mathcal{N}(0,\tau^2)$ for some $\tau>0.$ Given the SVD of the perturbed 
matrix
$$\tilde{A}=A+X=\sum_{j=1}^{m\wedge n}\tilde{\sigma}_i(\tilde{u}_i\otimes \tilde{v}_i),$$ 
our main interest lies in estimating singular vectors $u_i$ and $v_i$ of the matrix 
$A$ in the case when its singular values $\sigma_i$ are distinct, or, more generally,
in estimating the operators $P_k^{uu}, P_k^{uv}, P_k^{vu}, P_k^{vv}.$ To this end, 
we will use the estimators 
\begin{equation*}
\begin{split}
\tilde P_k^{uu}:=\sum_{i\in \Delta_k}(\tilde u_i\otimes \tilde u_i),&\ 
\tilde P_k^{uv}:=\sum_{i\in \Delta_k}(\tilde u_i\otimes \tilde v_i),\\
\tilde P_k^{vu}:=\sum_{i\in \Delta_k}(\tilde v_i\otimes \tilde u_i),&\ 
\tilde P_k^{vv}:=\sum_{i\in \Delta_k}(\tilde v_i\otimes \tilde v_i), 
\end{split}
\end{equation*}
and our main goal will be to study the fluctuations of the bilinear forms 
of these random operators around the bilinear forms of operators $P_k^{uu}, P_k^{uv},$ $P_k^{vu}, P_k^{vv}.$
In the case when the singular values of $A$ are distinct, this would allow us 
to study the fluctuations of linear forms of singular vectors $\tilde u_i, \tilde v_i$ around 
the corresponding linear forms of $u_i, v_i$ which would provide a way to control 
the fluctuations of components of ``empirical'' singular vectors in a given basis 
around their true counterparts. Clearly, the problem can be and will be reduced 
to the analysis of spectral representation of a symmetric random matrix 
\begin{equation}
\label{BXconstruct}
\tilde{B}=\Lambda(\tilde A)=\Big(\begin{array}{cc}0&\tilde{A}\\\tilde{A}'&0\end{array}\Big)= B+\Gamma,\quad {\rm where}\ \Gamma=\Lambda(X)=\Big(\begin{array}{cc}0&X\\X'&0\end{array}\Big),
\end{equation}
that can be viewed as a random perturbation of the symmetric matrix $B.$ 
The spectral representation of this matrix can be written in the form 
$$
\tilde B= \sum_{1\leq |i|\leq (m\wedge n)}\tilde \sigma_i (\tilde \theta_i\otimes \tilde \theta_i),
$$
where 
$$
\tilde \sigma_{-i}=-\tilde \sigma_i,\ 
\tilde \theta_i:=\frac{1}{\sqrt{2}}\Big(\begin{array}{cc}\tilde u_i\\ \tilde v_i\end{array}\Big),\ 
\tilde \theta_{-i}:=\frac{1}{\sqrt{2}}\Big(\begin{array}{cc}\tilde u_i\\ -\tilde v_i\end{array}\Big),\ 
i=1,\dots, (m\wedge n).
$$
If the operator norm $\|\Gamma\|$ of the ``noise" matrix $\Gamma$   
is small enough comparing with the ``spectral gap" of the $k$-th eigenvalue $\mu_k$ of $B$ (for some $k=1,\dots, d$), then it 
is easy to see that $\tilde P_k:= \sum_{i\in \Delta_k}(\tilde \theta_i\otimes \tilde \theta_i)$ is the orthogonal 
projector on the direct sum of eigenspaces of $\tilde B$ corresponding to the ``cluster" $\{\tilde \sigma_i:i\in \Delta_k\}$ of its eigenvalues localized in a neighborhood 
of $\mu_k.$ Moreover, 
$\tilde P_k = \frac{1}{2}\Big(\begin{array}{cc}\tilde P_k^{uu}&\tilde P_k^{uv}\\\tilde P_k^{vu}&\tilde P_k^{vv}\end{array}\Big).$
Thus, it is enough to study the fluctuations of bilinear forms of random orthogonal projectors $\tilde P_k$ around 
the corresponding bilinear form of the spectral projectors $P_k$ to derive similar properties of operators $\tilde P_k^{uu}, \tilde P_k^{uv},
\tilde P_k^{vu}, \tilde P_k^{vv}.$


We will be interested in bounding the bilinear forms of operators $\tilde{P}_k-P_k$ for $k=1,\dots ,d.$ 
To this end, we will provide separate bounds on the random error $\tilde{P}_k-{\mathbb E}\tilde P_k$ and on the bias ${\mathbb E}\tilde P_k-P_k.$
 For $k=1,\dots, d,$ $\bar g_k$ denotes the distance from the eigenvalue $\mu_k$ to the rest of 
the spectrum of $A$ (the eigengap of $\mu_k$). More specifically, for $2\leq k\leq d-1$, $\bar{g}_k=\min(\mu_k-\mu_{k+1},\mu_{k-1}-\mu_k),$ $\bar{g}_1=\mu_1-\mu_2$ and $\bar{g}_d=\min(\mu_{d-1}-\mu_d,\mu_d).$ 


The main assumption in the results that follow is that ${\mathbb E}\|X\|<\frac{\bar g_k}{2}$ (more precisely, ${\mathbb E}\|X\|\leq (1-\gamma)\frac{\bar g_k}{2}$ for a positive $\gamma$).  In view of the concentration inequality of 
Lemma~\ref{spectraldevlem} in the next section, 
this essentially means that the operator norm of the random perturbation matrix $\|\Gamma\|=\|X\|$ is strictly smaller than 
one half of the spectral gap $\bar g_k$ of singular value $\mu_k.$ Since, again by  Lemma~\ref{spectraldevlem}, 
${\mathbb E}\|X\|\asymp \tau \sqrt{m\vee n},$ this assumption 
also means that $\bar g_k \gtrsim \tau\sqrt{m\vee n}$  (so, the spectral gap $\bar g_k$ is sufficiently large). 
Our goal is to prove that, under this assumption, the values of bilinear form $\langle \tilde P_k x,y\rangle$ of random spectral projector $\tilde P_k$ have tight concentration around their means (with the magnitude of deviations of 
the order $\sqrt{\frac{1}{m\vee n}}$). We will also show that the bias ${\mathbb E}\tilde P_k-P_k$ of the spectral 
projector $\tilde P_k$ is ``aligned" with the spectral projector $P_k$ (up to an error of the order $\sqrt{\frac{1}{m\vee n}}$
in the operator norm). More precisely, the following results hold.


\begin{thm}
\label{hatPrconthm}
Suppose that for some $\gamma\in(0,1),$
${\mathbb E}\|X\|\leq (1-\gamma)\frac{\bar g_k}{2}.$
There exists a constant $D_{\gamma}>0$ such that, for all $x,y\in\mathbb{R}^{m+n}$ and for all $t\geq 1,$ the following inequality holds with probability at least $1-e^{-t}:$
 \begin{equation}
 \big|\big<(\tilde{P}_k-\mathbb{E}\tilde{P}_k)x,y\big>\big|\leq D_{\gamma}\frac{\tau\sqrt{t}}{\bar{g}_k}
 \Big(\frac{\tau\sqrt{m\vee n}+\tau\sqrt{t}}{\bar{g}_k}+1\Big)\|x\|\|y\|.
 \end{equation}
\end{thm}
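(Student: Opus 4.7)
The plan is to reduce the statement to concentration of Gaussian functionals by means of a perturbation-series expansion of the spectral projector $\tilde P_k$. By the Riesz integral formula,
$$
\tilde P_k - P_k = -\frac{1}{2\pi i}\oint_{\Gamma_k}\big(R_{\tilde B}(z)-R_B(z)\big)\,dz,
$$
where $\Gamma_k$ is a positively oriented circle of radius roughly $\bar g_k/2$ centered at $\mu_k$, and $R_B(z)=(B-zI)^{-1}$. On the event $\{\|\Gamma\|\le (1-\gamma/2)\bar g_k/2\}$ (which, by Lemma~\ref{spectraldevlem} and the assumption $\mathbb E\|X\|\le (1-\gamma)\bar g_k/2$, has probability at least $1-e^{-ct}$ for $t\lesssim m\vee n$), the Neumann expansion $R_{\tilde B}(z) = R_B(z)\sum_{j\ge0}(-\Gamma R_B(z))^j$ is valid on $\Gamma_k$, and integrating term by term yields
$$
\tilde P_k - P_k \;=\; L_k(\Gamma) + S_k(\Gamma),\qquad L_k(\Gamma):=\sum_{l:\,|l|\neq|k|}\frac{P_l\Gamma P_k+P_k\Gamma P_l}{\mu_k-\mu_l},
$$
with $S_k(\Gamma)$ the remainder, satisfying $\|S_k(\Gamma)\|\lesssim_\gamma \|\Gamma\|^2/\bar g_k^2$ on the good event, and, crucially, the G\^ateaux derivative $\|DS_k(\Gamma)[\cdot]\|\lesssim_\gamma \|\Gamma\|/\bar g_k^2$.

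The linear part is easy. Since $\Gamma=\Lambda(X)$ has Gaussian entries, $\langle L_k(\Gamma)x,y\rangle$ is a centered real Gaussian whose variance is computed directly from the explicit formula above and bounded by $C_\gamma\tau^2\|x\|^2\|y\|^2/\bar g_k^2$. Gaussian tail bounds then yield
$$
|\langle L_k(\Gamma)x,y\rangle|\;\le\; C_\gamma\,\frac{\tau\sqrt t}{\bar g_k}\|x\|\|y\|
$$
with probability at least $1-e^{-t}$, which produces the leading term in the theorem and accounts for the summand $1$ inside the parenthesis.

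For the nonlinear remainder I would apply Gaussian concentration to the functional $X\mapsto \langle S_k(\Gamma(X))x,y\rangle$, suitably extended to a globally Lipschitz function. Concretely, define the truncated functional $f(X):=\langle S_k(\Gamma)x,y\rangle\,\varphi(\|\Gamma\|)$, where $\varphi$ is a smooth $\mathrm{Lip}(2/\bar g_k)$ cutoff supported on $[0,(1-\gamma/2)\bar g_k/2]$ and equal to $1$ on $[0,(1-\gamma)\bar g_k/2]$. Using the derivative bound for $S_k$ together with the uniform bound $\|S_k(\Gamma)\|\lesssim \|\Gamma\|^2/\bar g_k^2$ on the support of $\varphi$, one checks that the Lipschitz constant of $f$ with respect to the Hilbert--Schmidt norm on $X$ is at most $C_\gamma\|x\|\|y\|/\bar g_k$; however, restricted to the event $\{\|X\|\le \mathbb E\|X\|+\tau\sqrt t\}$ (of probability $\ge 1-e^{-t}$ by Lemma~\ref{spectraldevlem}) the effective Lipschitz constant is only $C_\gamma(\tau\sqrt{m\vee n}+\tau\sqrt t)\|x\|\|y\|/\bar g_k^2$. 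Combining Gaussian concentration for $f$ with truncation at this level, and controlling $\mathbb Ef-\mathbb E\langle S_k(\Gamma)x,y\rangle$ via the probability of the complement (which is exponentially small), yields
$$
\bigl|\langle (S_k(\Gamma)-\mathbb ES_k(\Gamma))x,y\rangle\bigr|\;\lesssim_\gamma\;\frac{\tau\sqrt t\,(\tau\sqrt{m\vee n}+\tau\sqrt t)}{\bar g_k^{2}}\|x\|\|y\|,
$$
which is precisely the second term inside the parenthesis.

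The main obstacle is the last step: a na\"{\i}ve application of Gaussian isoperimetry to $X\mapsto \langle\tilde P_k x,y\rangle$ only produces the global Lipschitz constant $C_\gamma/\bar g_k$ and therefore cannot distinguish the linear from the higher-order contributions. The delicate point is to perform the perturbative splitting \emph{before} applying concentration, so that the quadratic remainder $S_k$ picks up the extra factor $\|\Gamma\|/\bar g_k$ in its derivative and produces the mixed term $\tau\sqrt t\,(\tau\sqrt{m\vee n}+\tau\sqrt t)/\bar g_k^2$ rather than a suboptimal $(\tau\sqrt{m\vee n}+\tau\sqrt t)^2/\bar g_k^2$. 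Assembling the two bounds via a union bound and choosing the constants in the cutoff in terms of $\gamma$ yields the inequality as stated.
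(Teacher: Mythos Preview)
Your approach is the paper's: split $\tilde P_k-\mathbb{E}\tilde P_k=L_k(\Gamma)+(S_k(\Gamma)-\mathbb{E}S_k(\Gamma))$, treat $\langle L_k(\Gamma)x,y\rangle$ as a centered Gaussian with variance $\lesssim\tau^2\|x\|^2\|y\|^2/\bar g_k^2$, and handle $S_k$ by Gaussian concentration applied to a Lipschitz truncation.

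One point needs to be tightened. Gaussian isoperimetry requires a \emph{global} Lipschitz bound, so an ``effective Lipschitz constant restricted to the event $\{\|X\|\le\mathbb E\|X\|+\tau\sqrt t\}$'' is not a legitimate input; with your fixed cutoff at level $(1-\gamma/2)\bar g_k/2$ the global Lipschitz constant of $f$ really is only $C_\gamma\|x\|\|y\|/\bar g_k$, which gives the suboptimal bound you warn against. The fix---and this is exactly what the paper does---is to make the truncation level itself depend on $t$: set $\delta=\delta(t):=\mathbb E\|\Gamma\|+c\tau\sqrt t$ and define $f_\delta(X)=\langle S_k(\Gamma)x,y\rangle\,\varphi(\|\Gamma\|/\delta)$. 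Then the \emph{global} Lipschitz constant of $f_\delta$ is $C_\gamma\,\delta(t)\|x\|\|y\|/\bar g_k^2\asymp(\tau\sqrt{m\vee n}+\tau\sqrt t)\|x\|\|y\|/\bar g_k^2$, Gaussian concentration gives the desired deviation around the median, and on $\{\|\Gamma\|\le\delta(t)\}$ (probability $\ge 1-e^{-t}$) one has $f_\delta=\langle S_k(\Gamma)x,y\rangle$. The regime $\tau\sqrt t\gtrsim_\gamma\bar g_k$ (where $\delta(t)$ would violate the gap condition) is disposed of by the crude bound $|\langle S_k(\Gamma)x,y\rangle|\lesssim\|\Gamma\|^2\|x\|\|y\|/\bar g_k^2$. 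Your phrase ``truncation at this level'' may well have meant precisely this $t$-dependent cutoff, in which case you have the paper's argument; but as written the two-step description (fixed cutoff, then local Lipschitz on an event) does not go through.
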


Assuming that $t\lesssim m\vee n$ and taking into account 
that $\tau \sqrt{m\vee n}\asymp {\mathbb E}\|X\|\leq \bar g_k,$ we easily get from the bound of Theorem \ref{hatPrconthm} that 
$$
\big|\big<(\tilde{P}_k-\mathbb{E}\tilde{P}_k)x,y\big>\big| \lesssim_{\gamma} 
\frac{\tau \sqrt{t}}{\bar g_k}\|x\|\|y\| \lesssim_{\gamma} \sqrt{\frac{t}{m\vee n}}\|x\|\|y\|, 
$$
so, the fluctuations of $\langle \tilde P_k x,y\rangle$ around its expectation are indeed of the order 
$\sqrt{\frac{1}{m\vee n}}.$


The next result shows that the bias ${\mathbb E}\tilde P_k-P_k$ of $\tilde P_k$ can be represented 
as a sum of a ``low rank part" $P_k({\mathbb E} \tilde P_k-P_k)P_k$ and a small 
remainder. 

\begin{thm}
\label{prdevthm}
The following bound holds with some constant $D>0:$
\begin{equation}
\label{biasA}
\Bigl\|{\mathbb E}\tilde P_k-P_k\Bigr\| \leq 
D\frac{\tau^2 (m\vee n)}{\bar g_k^2}.
\end{equation}
Moreover, suppose that for some $\gamma\in(0,1)$, 
 ${\mathbb E}\|X\|\leq (1-\gamma)\frac{\bar g_k}{2}.$
  Then, there exists a constant $C_{\gamma}>0$ such that
 \begin{equation}
 \label{biasB}
 \big\|\mathbb{E}\tilde{P}_k-P_k-P_k({\mathbb E} \tilde P_k-P_k)P_k\big\|\leq C_{\gamma}\frac{\nu_k \tau^2\sqrt{m\vee n}}{\bar{g}_k^2}.
 \end{equation}
\end{thm}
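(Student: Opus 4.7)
The approach I would take is the standard Riesz/Neumann expansion combined with explicit Gaussian moment calculations. Fix a circle $\gamma_k\subset\mathbb{C}$ of radius $\bar g_k/2$ around $\mu_k$, so that $\gamma_k$ encloses only the eigenvalue $\mu_k$ of $B$. On the good event $\mathcal{E}:=\{\|\Gamma\|\le(1-\gamma/2)\bar g_k/2\}$, which has probability at least $1-e^{-c_{\gamma}(m\vee n)}$ by Lemma~\ref{spectraldevlem} and the hypothesis $\mathbb{E}\|X\|\le(1-\gamma)\bar g_k/2$, the resolvent $(\tilde B-\lambda I)^{-1}$ admits a convergent Neumann expansion on $\gamma_k$, and integrating term by term gives
\[
\tilde P_k-P_k=\sum_{n\ge 1}S_k^{(n)}(\Gamma),\qquad
S_k^{(n)}(\Gamma):=\frac{(-1)^{n+1}}{2\pi i}\oint_{\gamma_k}R(\lambda)\bigl[\Gamma R(\lambda)\bigr]^n d\lambda,
\]
with $R(\lambda)=(B-\lambda I)^{-1}$ and $\|S_k^{(n)}(\Gamma)\|\lesssim_{\gamma}(\|\Gamma\|/\bar g_k)^n$. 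Lemma~\ref{spectraldevlem} also yields $\mathbb{E}\|\Gamma\|^p\lesssim_p(\tau\sqrt{m\vee n})^p$ for every $p\ge 1$.

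For \eqref{biasA}: if $\tau^2(m\vee n)/\bar g_k^2$ exceeds an absolute constant, the bound is trivial since $\|\tilde P_k-P_k\|\le 2$; otherwise $\bar g_k\gtrsim\tau\sqrt{m\vee n}$. Decomposing $\mathbb{E}(\tilde P_k-P_k)$ according to $\mathcal{E}$ and $\mathcal{E}^c$, the tail contribution is exponentially small. On $\mathcal{E}$, since $\Gamma\stackrel{d}{=}-\Gamma$, since $\mathbbm{1}_{\mathcal{E}}$ is $\Gamma$-even, and since $S_k^{(n)}(-\Gamma)=(-1)^n S_k^{(n)}(\Gamma)$, every odd-order Neumann term has zero expectation; summing the surviving even-order bounds gives $\|\mathbb{E}[(\tilde P_k-P_k)\mathbbm{1}_{\mathcal{E}}]\|\lesssim_{\gamma}\mathbb{E}\|\Gamma\|^2/\bar g_k^2\lesssim\tau^2(m\vee n)/\bar g_k^2$, which is \eqref{biasA}.

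For \eqref{biasB}, set $T=\tilde P_k-P_k$ and decompose $\mathbb{E}T-P_k\mathbb{E}T P_k=P_k\mathbb{E}T P_k^\perp+P_k^\perp\mathbb{E}T P_k+P_k^\perp\mathbb{E}T P_k^\perp$. The core step is to evaluate the leading second-order contribution $\mathbb{E}S_k^{(2)}(\Gamma)$ by residue calculus together with the Gaussian moment identity
\[
\mathbb{E}[\Gamma M\Gamma]=\tau^2\!\left(\begin{array}{cc}\mathrm{tr}(M_{vv})I_m & M_{vu}'\\ M_{uv}' & \mathrm{tr}(M_{uu})I_n\end{array}\right)
\]
valid for every $(m+n)\times(m+n)$ matrix $M$ in block form $\bigl(\begin{smallmatrix}M_{uu}&M_{uv}\\ M_{vu}&M_{vv}\end{smallmatrix}\bigr)$. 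A direct calculation then yields $\mathbb{E}S_k^{(2)}(\Gamma)=-P_k\mathbb{E}[\Gamma C_k^2\Gamma]P_k+\tfrac{\tau^2\nu_k}{2}C_k^2-\tfrac{\tau^2}{8\mu_k^2}P_{-k}$, where $C_k:=\sum_{j\ne k}(\mu_k-\mu_j)^{-1}P_j$. The first summand is supported in the $P_k(\cdot)P_k$ block and contributes only to $P_k\mathbb{E}T P_k$; the last two together contribute to the off-diagonal remainder with operator norm $\lesssim\tau^2\nu_k/\bar g_k^2$; and the $(P_k,P_k^\perp)$ and $(P_k^\perp,P_k)$ blocks of $\mathbb{E}S_k^{(2)}$ vanish identically. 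The main obstacle is controlling the higher-order contributions $\mathbb{E}S_k^{(2n)}$, $n\ge 2$, in the off-diagonal blocks: the crude moment bound $\mathbb{E}\|S_k^{(4)}\|\lesssim\tau^4(m\vee n)^2/\bar g_k^4$ is too weak by a factor of roughly $\sqrt{m\vee n}/\nu_k$. The sharper estimate requires expanding $\mathbb{E}S_k^{(2n)}(\Gamma)$ via Wick's formula and showing that any Wick contraction whose trace is of ``long'' size $m\vee n$ lands in the $P_k(\cdot)P_k$ block, while the contractions surviving in the off-diagonal blocks only produce traces of ``short'' size $\nu_k$. Combining the explicit second-order expression, the Wick analysis of higher orders, and the exponentially small contribution of $\mathcal{E}^c$ yields \eqref{biasB}.
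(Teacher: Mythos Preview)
Your argument for \eqref{biasA} is essentially the paper's: the paper also writes $\mathbb{E}\tilde P_k-P_k=\mathbb{E}S_k(\Gamma)$ (since $\mathbb{E}L_k(\Gamma)=0$), bounds $\|S_k(\Gamma)\|\lesssim(\|\Gamma\|/\bar g_k)^2$, and invokes the moment bound $\mathbb{E}\|\Gamma\|^2\asymp\tau^2(m\vee n)$. Your symmetry observation $\Gamma\stackrel{d}{=}-\Gamma$ killing odd orders is a pleasant refinement but not needed here.

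For \eqref{biasB}, your route departs from the paper's in a substantive way, and the departure is exactly where you yourself flag the ``main obstacle.'' The paper \emph{does not} compute $\mathbb{E}S_k^{(2)}(\Gamma)$ explicitly, nor does it run a Wick/combinatorial analysis of the higher-order terms. Instead, it treats every order $r\ge 2$ uniformly by a conditioning--plus--Cauchy--Schwarz device. Writing $P_k=\sum_{l\in\Delta_k}\theta_l\otimes\theta_l$, each off-diagonal contribution (say $P_k^\perp S_k(\Gamma)P_k$) is expanded so that one factor $\Gamma\theta_l$ is isolated; then one uses that, for the block Gaussian $\Gamma$, the variables $\{P_k\check X_j^c\}_j$ are independent of $\{P_t\check X_j^c:t\neq k\}_j$ (and likewise for rows). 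Conditioning on the latter and applying Cauchy--Schwarz replaces one crude factor $\delta_{n,m}\asymp\tau\sqrt{m\vee n}$ by a single $\tau$, giving the bound $\nu_k\tau(2\delta_{n,m}/\bar g_k)^{r-1}/\bar g_k$ at order $r$ and hence $\nu_k\tau^2\sqrt{m\vee n}/\bar g_k^2$ after summing in $r$. This is short, works the same for all three off-diagonal blocks, and avoids any term-by-term combinatorics.

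By contrast, your plan leaves the decisive step unproved. The assertion that ``any Wick contraction whose trace is of `long' size $m\vee n$ lands in the $P_k(\cdot)P_k$ block'' is not obvious: after integrating over $\gamma_k$, the surviving off-diagonal terms still contain internal $P_k$'s interleaved with $\tilde R_B$'s, and the Gaussian moment identity you quote produces both trace pieces (like $\mathrm{tr}(M_{vv})I_m$) and transposed off-diagonal pieces ($M_{vu}'$, $M_{uv}'$); tracking which pairings yield ``long'' traces through several nested applications is nontrivial and is not carried out. Until that lemma is actually established, the proposal has a genuine gap precisely at the point where the sharp $\sqrt{m\vee n}$ (rather than $m\vee n$) is won. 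If you want to close it along your lines, you would need a structural statement to the effect that, in the off-diagonal blocks, every Wick pairing forces at least one contraction across a $P_k$ factor, so that the corresponding trace is at most $\nu_k$; the paper's conditioning argument is the analytic shortcut that delivers exactly this gain without the combinatorics.
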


Since, under the assumption ${\mathbb E}\|X\|\leq (1-\gamma)\frac{\bar g_k}{2},$ we have $\bar g_k\gtrsim \tau\sqrt{m\vee n},$
bound (\ref{biasB}) implies that the following representation holds
$$
{\mathbb E}\tilde P_k -P_k = P_k({\mathbb E}\tilde P_k -P_k)P_k + T_k
$$
with the remainder $T_k$ satisfying the bound
$$
\|T_k\|\lesssim_{\gamma}\frac{\tau^2\sqrt{m\vee n}}{\bar{g}_k^2}\lesssim_{\gamma} \frac{\nu_k}{\sqrt{m\vee n}}.
$$

We will now consider a special case when $\mu_k$ has multiplicity $1$ ($\nu_k=1$). 
In this case, $\Delta_k=\{i_k\}$ for some $i_k\in\left\{1,\dots, (m\wedge n)\right\}$ and 
$P_k=\theta_{i_k}\otimes\theta_{i_k}.$ Let $\tilde{P}_k:=\tilde{\theta}_{i_k}\otimes\tilde{\theta}_{i_k}.$
Note that on the event $\|\Gamma\|=\|X\|<\frac{\bar g_k}{2}$ that is assumed to hold with a high probability,
the multiplicity of $\tilde \sigma_{i_k}$ is also $1$ (see the discussion in the next section after Lemma \ref{spectralmomentlem}). Note also that the unit eigenvectors $\theta_{i_k}, \tilde \theta_{i_k}$
are defined only up to their signs. Due to this, we will assume without loss of generality that $\langle \tilde \theta_{i_k}, \theta_{i_k}\rangle \geq 0.$
 
Since $P_k=\theta_{i_k}\otimes \theta_{i_k}$ is an operator of rank $1,$ we have 
$$
P_k({\mathbb E}\tilde P_k-P_k)P_k=b_k P_k,
$$
where 
$$
b_k:= \Bigl\langle ({\mathbb E}\tilde P_k-P_k)\theta_{i_k}, \theta_{i_k}\Bigr\rangle
={\mathbb E}\langle \tilde \theta_{i_k}, \theta_{i_k}\rangle^2-1. 
$$
Therefore,
\begin{equation*}
 \mathbb{E}\tilde{P}_k=(1+b_k)P_k+T_k
\end{equation*}
and $b_k$ turns out to be the main parameter characterizing 
the bias of $\tilde P_k.$ 
Clearly,
$
b_k\in [-1,0]
$
(note that $b_k=0$ is equivalent to $\tilde \theta_{i_k}=\theta_{i_k}$ a.s. and $b_k=-1$ is equivalent 
to $\tilde \theta_{i_k}\perp \theta_{i_k}$ a.s.). 
On the other hand, by bound (\ref{biasA}) of Theorem  \ref{prdevthm},
\begin{equation}
\label{bkbound}
|b_k|\leq \Bigl\|{\mathbb E}\tilde P_k-P_k\Bigr\|
\lesssim \frac{\tau^2 (m\vee n)}{\bar g_k^2}.
\end{equation}
In the next theorem, it will be assumed that the bias is not too large in the sense that $b_k$ is bounded away by 
a constant $\gamma>0$ from $-1.$ 


\begin{thm}
\label{thetadevthm}
Suppose that, for some $\gamma\in(0,1),$ ${\mathbb E}\|X\|\leq (1-\gamma)\frac{\bar{g}_k}{2}$  
and $1+b_k\geq \gamma.$ Then, for all 
$x\in\mathbb{R}^{m+n}$ and for all $t\geq 1$ with probability at least $1-e^{-t}$,
 \begin{equation*}
 \big|\big<\tilde{\theta}_{i_k}-\sqrt{1+b_k}\theta_{i_k},x\big>\big|\lesssim_{\gamma}
 \frac{\tau\sqrt{t}}{\bar{g}_k}
 \Big(\frac{\tau\sqrt{m\vee n}+\tau\sqrt{t}}{\bar{g}_k}+1\Big)\|x\|.
 \end{equation*}
\end{thm}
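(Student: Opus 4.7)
The plan is to reduce the statement to bilinear forms of the rank-one projector $\tilde{P}_k=\tilde{\theta}_{i_k}\otimes\tilde{\theta}_{i_k}$ and then feed the bounds of Theorems~\ref{hatPrconthm} and~\ref{prdevthm} into a short algebraic manipulation. On the high-probability event $\{\|X\|<\bar{g}_k/2\}$ identified in the discussion preceding the theorem, the cluster $\Delta_k=\{i_k\}$ has multiplicity one in the perturbed spectrum as well, so $\tilde{P}_k$ is indeed of rank one; combined with the sign convention $\langle\tilde{\theta}_{i_k},\theta_{i_k}\rangle\geq 0$ and the identity $\|\tilde{P}_k\theta_{i_k}\|^2=\langle\tilde{P}_k\theta_{i_k},\theta_{i_k}\rangle$ this yields the key representation
\begin{equation*}
\langle\tilde{\theta}_{i_k},x\rangle=\frac{\langle\tilde{P}_k x,\theta_{i_k}\rangle}{\sqrt{\langle\tilde{P}_k\theta_{i_k},\theta_{i_k}\rangle}}.
\end{equation*}

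Next I would apply Theorem~\ref{hatPrconthm} twice, first with $(x,y)=(x,\theta_{i_k})$ and then with $(x,y)=(\theta_{i_k},\theta_{i_k})$; a union bound yields, with probability at least $1-e^{-t}$ (absorbing a $\log 2$ shift into the constant), simultaneous concentration of both bilinear forms around their expectations at the rate $R(t):=\frac{\tau\sqrt{t}}{\bar{g}_k}\bigl(\frac{\tau\sqrt{m\vee n}+\tau\sqrt{t}}{\bar{g}_k}+1\bigr)$. By Theorem~\ref{prdevthm} with $\nu_k=1$ one has $\mathbb{E}\tilde{P}_k=(1+b_k)P_k+T_k$ with $\|T_k\|\lesssim_{\gamma}\tau^2\sqrt{m\vee n}/\bar{g}_k^2$; since $\mathbb{E}\langle\tilde{P}_k\theta_{i_k},\theta_{i_k}\rangle=1+b_k$ by the definition of $b_k$, and $\mathbb{E}\langle\tilde{P}_k x,\theta_{i_k}\rangle=(1+b_k)\langle x,\theta_{i_k}\rangle+\langle T_k x,\theta_{i_k}\rangle$, writing $a:=1+b_k$ and setting $\delta_1:=\langle\tilde{P}_k x,\theta_{i_k}\rangle-a\langle x,\theta_{i_k}\rangle$ and $\delta_2:=\langle\tilde{P}_k\theta_{i_k},\theta_{i_k}\rangle-a$, one obtains $|\delta_1|\lesssim_{\gamma} R(t)\|x\|$ (the bias term $|\langle T_k x,\theta_{i_k}\rangle|\leq\|T_k\|\|x\|$ is absorbed since $t\geq 1$) and $|\delta_2|\lesssim_{\gamma} R(t)$.

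The conclusion then follows from the algebraic identity
\begin{equation*}
\frac{a\langle x,\theta_{i_k}\rangle+\delta_1}{\sqrt{a+\delta_2}}-\sqrt{a}\,\langle x,\theta_{i_k}\rangle=\frac{\delta_1}{\sqrt{a+\delta_2}}-\langle x,\theta_{i_k}\rangle\,\sqrt{a}\,\frac{\delta_2}{\sqrt{a+\delta_2}\bigl(\sqrt{a}+\sqrt{a+\delta_2}\bigr)},
\end{equation*}
combined with the lower bound $\sqrt{a+\delta_2}\geq\sqrt{a/2}\geq\sqrt{\gamma/2}$ valid whenever $|\delta_2|\leq a/2$; this gives $|\langle\tilde{\theta}_{i_k}-\sqrt{1+b_k}\,\theta_{i_k},x\rangle|\lesssim_{\gamma} R(t)\|x\|$, which is precisely the claimed inequality.

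The main obstacle is the regime in which $|\delta_2|>a/2$ (so the linearization is useless) or the complementary event $\{\|X\|\geq\bar{g}_k/2\}$ (on which the rank-one identity for $\tilde{P}_k$ may break down). In either case one necessarily has $R(t)\gtrsim_{\gamma} 1$, so the right-hand side of the target inequality already dominates a $\gamma$-dependent multiple of $\|x\|$ and the trivial bound $|\langle\tilde{\theta}_{i_k}-\sqrt{1+b_k}\,\theta_{i_k},x\rangle|\leq(1+\sqrt{1+b_k})\|x\|\leq 2\|x\|$ suffices after an adjustment of constants. The event $\{\|X\|\geq\bar{g}_k/2\}$ itself has probability controlled by $e^{-t}$ via Lemma~\ref{spectraldevlem} in the range of $t$ where $R(t)$ is small, so it is absorbed into the $1-e^{-t}$ probability statement.
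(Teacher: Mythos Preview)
Your proposal is correct and follows essentially the same route as the paper: your quantities $\delta_1,\delta_2$ coincide with the paper's $\rho_k(x),\rho_k(\theta_{i_k})$, your algebraic identity is exactly identity~(\ref{thetadevthmineq1}) (which the paper imports from \cite{koltchinskii2014asymptotics}), and your case split according to whether $R(t)$ is small or $\gtrsim_\gamma 1$ mirrors the paper's split on whether $\tau\sqrt{t}/\bar g_k\le c_\gamma$. The only cosmetic difference is that you derive the rank-one representation $\langle\tilde\theta_{i_k},x\rangle=\langle\tilde P_k x,\theta_{i_k}\rangle/\sqrt{\langle\tilde P_k\theta_{i_k},\theta_{i_k}\rangle}$ explicitly rather than citing it.
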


Assuming that $t\lesssim m\vee n,$ the bound of Theorem \ref{thetadevthm} implies that 
$$
\big|\big<\tilde{\theta}_{i_k}-\sqrt{1+b_k}\theta_{i_k},x\big>\big|\lesssim_{\gamma}
\frac{\tau\sqrt{t}}{\bar{g}_k}\|x\|\lesssim_{\gamma}\sqrt{\frac{t}{m\vee n}}\|x\|.
$$
Therefore, the fluctuations of $\langle \tilde \theta_{i_k},x\rangle$ around 
$\sqrt{1+b_k}\langle \theta_{i_k},x\rangle$ are of the order $\sqrt{\frac{1}{m\vee n}}.$

Recall that $\theta_{i_k}:= \frac{1}{\sqrt{2}}\Big(\begin{array}{cc}u_{i_k}\\v_{i_k}\end{array}\Big),$ 
where $u_{i_k}, v_{i_k}$ are left and right singular vectors of $A$ corresponding to its singular 
value $\mu_k.$ Theorem \ref{thetadevthm} easily implies the following corollary.

\begin{cor}
Under the conditions of Theorem \ref{thetadevthm},
with probability at least $1-\frac{1}{m+n}$,
\begin{equation*}
 \max\Big\{\big\|\tilde{u}_{i_k}-\sqrt{1+b_k}u_{i_k}\big\|_{\infty},\big\|\tilde{v}_{i_k}-\sqrt{1+b_k}v_{i_k}\big\|_{\infty}\Big\}\lesssim\sqrt{\frac{\log(m+n)}{m\vee n}}.
\end{equation*}
\end{cor}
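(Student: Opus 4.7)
The plan is to reduce the $\ell_\infty$ bounds for $\tilde u_{i_k}-\sqrt{1+b_k}\,u_{i_k}$ and $\tilde v_{i_k}-\sqrt{1+b_k}\,v_{i_k}$ to the linear-form bound of Theorem~\ref{thetadevthm} evaluated at lifted canonical basis vectors, followed by a union bound over the $m+n$ coordinates.

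First I would embed the canonical bases of $\mathbb R^m$ and $\mathbb R^n$ into $\mathbb R^{m+n}$: for $i=1,\dots,m$ let $x_i\in\mathbb R^{m+n}$ be the unit vector obtained by placing $e_i^m$ in the first $m$ coordinates and zeros in the last $n$; for $j=1,\dots,n$ let $y_j$ be defined analogously with $e_j^n$ in the last $n$ coordinates. By the block definitions of $\theta_{i_k}$ and $\tilde\theta_{i_k}$,
\[
\bigl\langle \tilde\theta_{i_k}-\sqrt{1+b_k}\,\theta_{i_k},\,x_i\bigr\rangle
=\tfrac{1}{\sqrt 2}\bigl\langle \tilde u_{i_k}-\sqrt{1+b_k}\,u_{i_k},\,e_i^m\bigr\rangle,
\]
and analogously for $y_j$ with $\tilde v_{i_k},v_{i_k}$ in place of $\tilde u_{i_k},u_{i_k}$. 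Taking the max over $i$ and $j$, both $\|\tilde u_{i_k}-\sqrt{1+b_k}u_{i_k}\|_\infty$ and $\|\tilde v_{i_k}-\sqrt{1+b_k}v_{i_k}\|_\infty$ are equal to $\sqrt 2$ times the maximum of $|\langle\tilde\theta_{i_k}-\sqrt{1+b_k}\theta_{i_k},z\rangle|$ over the appropriate subset of the $m+n$ unit vectors $\{x_i\}\cup\{y_j\}$.

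Next I would apply Theorem~\ref{thetadevthm} with $t=2\log(m+n)$ separately to each of these $m+n$ unit vectors. Under the standing hypotheses, Lemma~\ref{spectraldevlem} gives $\tau\sqrt{m\vee n}\lesssim\bar g_k$, and the remark following Theorem~\ref{thetadevthm} shows that once $t\lesssim m\vee n$ the theorem simplifies to
\[
\bigl|\langle\tilde\theta_{i_k}-\sqrt{1+b_k}\theta_{i_k},z\rangle\bigr|\lesssim_\gamma\sqrt{t/(m\vee n)}\,\|z\|
\]
on an event of probability at least $1-e^{-t}$. With $t=2\log(m+n)$ the condition $t\lesssim m\vee n$ holds outside a degenerate small-dimension regime in which the conclusion of the corollary can be absorbed into the absolute constant, so each individual linear form is bounded by a constant multiple of $\sqrt{\log(m+n)/(m\vee n)}$ with probability at least $1-(m+n)^{-2}$.

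Finally I would take a union bound over the $m+n$ choices of $z$, producing a single event of probability at least $1-(m+n)^{-1}$ on which both stated $\ell_\infty$ bounds hold simultaneously; this is the claim. I do not foresee any substantive obstacle, since all the hard probabilistic work has already been packaged in Theorem~\ref{thetadevthm}: what remains is the block identification of $\theta_{i_k}$ with the pair $(u_{i_k},v_{i_k})$, the routine choice $t=2\log(m+n)$, and the union bound.
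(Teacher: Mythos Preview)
Your proposal is correct and matches the paper's own argument essentially line for line: take $t=2\log(m+n)$, apply Theorem~\ref{thetadevthm} to the canonical basis vectors $e_i^{m+n}$, use the block identification $\theta_{i_k}=\tfrac{1}{\sqrt 2}(u_{i_k}',v_{i_k}')'$, and finish with a union bound over the $m+n$ coordinates.
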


For the proof, it is enough to take $t= 2\log (m+n),$ $x=e_{i}^{m+n}, i=1,\dots, (m+n)$ and to use the bound of 
Theorem \ref{thetadevthm} along with the union bound. 
Then recalling that $\theta_{i_k}=\frac{1}{\sqrt{2}}(u_{i_k}',v_{i_k}')',$ Theorem~\ref{thetadevthm} easily implies 
the claim.

Theorem \ref{thetadevthm} shows that the ``naive estimator" $\langle \tilde \theta_{i_k}, x\rangle$ of linear  
form $\langle \theta_{i_k},x\rangle$ could be improved by reducing its bias that, in principle,  could be done by its 
simple rescaling $\langle \tilde \theta_{i_k}, x\rangle\mapsto \langle (1+b_k)^{-1/2}\tilde \theta_{i_k}, x\rangle.$
Of course, the difficulty with this approach is related to the fact that the bias parameter $b_k$ is unknown.
We will outline below a simple approach based on repeated observations of matrix $A.$
More specifically, let $\tilde{A}^1=A+X^1$ and $\tilde{A}^2=A+X^2$ be two independent copies of $\tilde A$
and denote $\tilde B^1=\Lambda(\tilde A^1), \tilde B^2=\Lambda(\tilde A^2).$
Let $\tilde{\theta}_{i_k}^1$ and $\tilde{\theta}_{i_k}^2$
be the eigenvectors of $\tilde{B}^1$ and $\tilde{B}^2$ corresponding to their eigenvalues $\tilde \sigma^1_{i_k}, \tilde \sigma^2_{i_k}.$ The signs of $\tilde{\theta}_{i_k}^1$ and $\tilde{\theta}_{i_k}^2$ are chosen so that 
$\big<\tilde{\theta}_{i_k}^1,\tilde{\theta}_{i_k}^2\big>\geq 0$. Let
\begin{equation}
\label{breq}
 \tilde{b}_k:=\big<\tilde{\theta}_{i_k}^1,\tilde{\theta}_{i_k}^2\big>-1.
\end{equation}
Given $\gamma>0,$ define 
\begin{equation*}
 \hat{\theta}_{i_k}^{(\gamma)}:=\frac{\tilde{\theta}_{i_k}^1}{\sqrt{1+\tilde{b}_k}\vee \frac{\sqrt{\gamma}}{2}}.
\end{equation*}

\begin{cor}
\label{thetacor}
 Under the assumptions of Theorem~\ref{thetadevthm}, there exists a constant $D_{\gamma}>0$ such that for all $x\in {\mathbb R}^{m+n}$
 and all $t\geq 1$ with probability at least $1-e^{-t}$,
 \begin{equation}
 \label{bkbd}
 |\hat{b}_k-b_k|\leq D_{\gamma}\frac{\tau\sqrt{t}}{\bar{g}_k}
 \Big[\frac{\tau\sqrt{m\vee n}+\tau\sqrt{t}}{\bar{g}_k}+1\Big]
\end{equation}
 and 
 \begin{equation}
 \label{estbd}
 |\big<\hat{\theta}_{i_k}^{(\gamma)}-\theta_{i_k},x\big>|\leq D_{\gamma}\frac{\tau\sqrt{t}}{\bar{g}_k}
 \Big[\frac{\tau\sqrt{m\vee n}+\tau\sqrt{t}}{\bar{g}_k}+1\Big]\|x\|.
 \end{equation}
\end{cor}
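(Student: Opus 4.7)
My plan is to reduce both bounds to applications of Theorem~\ref{thetadevthm} on each of the two independent perturbations, and to control the only term that does not directly fit this template (a quadratic-in-noise cross term) by a conditioning argument. Throughout, let $\varepsilon_t:=\frac{\tau\sqrt{t}}{\bar{g}_k}\big(\frac{\tau\sqrt{m\vee n}+\tau\sqrt{t}}{\bar{g}_k}+1\big)$ denote the generic right-hand side of Theorem~\ref{thetadevthm}. Every unit eigenvector has norm $1$, so both claimed bounds are trivial once $\varepsilon_t\gtrsim_\gamma 1$; hence I may assume $\varepsilon_t$ is smaller than any prescribed constant depending on $\gamma$.

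First, I apply Theorem~\ref{thetadevthm} separately to $\tilde B^1$ and $\tilde B^2$, under the sign convention $\langle\tilde\theta^j_{i_k},\theta_{i_k}\rangle\ge 0$, and write
\begin{equation*}
\tilde\theta^j_{i_k}=\sqrt{1+b_k}\,\theta_{i_k}+\xi_j,\qquad j=1,2,
\end{equation*}
where $\xi_1,\xi_2$ are independent and obey $|\langle\xi_j,x\rangle|\lesssim_\gamma\varepsilon_t\|x\|$ for every deterministic $x\in\mathbb{R}^{m+n}$ on events of probability $\ge 1-e^{-t}$. Taking $x=\theta_{i_k}$ and using $\sqrt{1+b_k}\ge\sqrt{\gamma}$ and $\|\theta_{i_k}\|=1$, I get $\langle\tilde\theta^j_{i_k},\theta_{i_k}\rangle\ge\sqrt{\gamma}/2>0$, which forces $\langle\tilde\theta^1_{i_k},\tilde\theta^2_{i_k}\rangle\ge 0$ and shows that this sign convention coincides with the one used to define $\tilde b_k$ in the corollary.

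Next, using $\|\theta_{i_k}\|^2=1$, I expand
\begin{equation*}
\tilde b_k+1=\langle\tilde\theta^1_{i_k},\tilde\theta^2_{i_k}\rangle=(1+b_k)+\sqrt{1+b_k}\,\langle\theta_{i_k},\xi_1+\xi_2\rangle+\langle\xi_1,\xi_2\rangle.
\end{equation*}
The linear-in-$\xi_j$ terms are $O_\gamma(\varepsilon_t)$ by direct application of Theorem~\ref{thetadevthm} with $x=\theta_{i_k}$. For the bilinear term I condition on $X^2$, which freezes $\xi_2$; applying Theorem~\ref{thetadevthm} to $\tilde\theta^1_{i_k}$ with the now-deterministic test vector $x=\xi_2$ yields $|\langle\xi_1,\xi_2\rangle|\lesssim_\gamma\varepsilon_t\|\xi_2\|$, and the crude bound $\|\xi_2\|\le\|\tilde\theta^2_{i_k}\|+\sqrt{1+b_k}\|\theta_{i_k}\|\le 2$ suffices. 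Integrating out $X^2$ and absorbing the union bound over the finitely many events into $D_\gamma$ gives $|\tilde b_k-b_k|\lesssim_\gamma\varepsilon_t$, which is (\ref{bkbd}).

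For (\ref{estbd}) the bound just proved makes $|\tilde b_k-b_k|<\gamma/4$ under the smallness assumption on $\varepsilon_t$; hence $1+\tilde b_k\ge 3\gamma/4$, so $\sqrt{1+\tilde b_k}\vee\sqrt{\gamma}/2=\sqrt{1+\tilde b_k}$ and $|\sqrt{1+\tilde b_k}-\sqrt{1+b_k}|\le|\tilde b_k-b_k|/\sqrt{\gamma}$. I then decompose
\begin{equation*}
\hat\theta^{(\gamma)}_{i_k}-\theta_{i_k}=\frac{\xi_1}{\sqrt{1+\tilde b_k}}+\frac{\sqrt{1+b_k}-\sqrt{1+\tilde b_k}}{\sqrt{1+\tilde b_k}}\,\theta_{i_k},
\end{equation*}
pair with $x$, and bound the first summand by Theorem~\ref{thetadevthm} and the second by (\ref{bkbd}); each contributes $O_\gamma(\varepsilon_t\|x\|)$. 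The main obstacle, and the only place where a naive bound fails, is the quadratic noise term $\langle\xi_1,\xi_2\rangle$: using only $\|\xi_j\|\le 2$ together with a generic operator-norm estimate on $\xi_1$ would cost an extra factor of order $\sqrt{m\vee n}$ and spoil the rate. The resolution crucially uses independence of $X^1$ and $X^2$ to realize this cross term, upon conditioning, as a linear form of $\tilde\theta^1_{i_k}$ against a deterministic bounded vector, which is exactly the setting in which Theorem~\ref{thetadevthm} yields the sharp rate.
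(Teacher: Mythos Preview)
Your proof is correct and follows essentially the same approach as the paper's: the same decomposition of $\tilde b_k-b_k$ into two linear terms and the bilinear cross term, the same conditioning-on-$X^2$ argument to control $\langle\xi_1,\xi_2\rangle$ via Theorem~\ref{thetadevthm} together with the crude bound $\|\xi_2\|\le 2$, and the same treatment of (\ref{estbd}) by splitting into the small-$\varepsilon_t$ regime (where $1+\tilde b_k$ is bounded below) and the trivial large-$\varepsilon_t$ regime. One small remark: your claim that $\langle\tilde\theta^j_{i_k},\theta_{i_k}\rangle\ge\sqrt{\gamma}/2$ alone ``forces $\langle\tilde\theta^1_{i_k},\tilde\theta^2_{i_k}\rangle\ge 0$'' is not literally true at the point you state it, but it does follow once you have the full bound $|\tilde b_k-b_k|\lesssim_\gamma\varepsilon_t$ under the Theorem~\ref{thetadevthm} sign convention (since then $\langle\tilde\theta^1_{i_k},\tilde\theta^2_{i_k}\rangle\ge\gamma-O_\gamma(\varepsilon_t)>0$), so just defer the sign-compatibility remark to after the bound on the cross term.
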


Note that $\hat \theta_{i_k}^{(\gamma)}$ is not necessarily a unit vector. However, its linear form provides  
a better approximation of the linear forms of $\theta_{i_k}$ than in the case of vector $\tilde \theta_{i_k}^1$ that 
is properly normalized.   
Clearly, the result implies similar bounds for the singular vectors $\hat{u}_{i_k}^{(\gamma)}$ and $\hat{v}_{i_k}^{(\gamma)}$.

\section{Proofs of the main results}
\label{devhatPsec}

The proofs follow the approach of Koltchinskii and Lounici~\cite{koltchinskii2014asymptotics} who did a similar analysis in the problem of estimation of spectral projectors of sample covariance.  
We start with discussing several preliminary facts used in what follows. 
Lemma~\ref{spectraldevlem} and Lemma~\ref{spectralmomentlem} below provide moment bounds and a concentration 
inequality for $\|\Gamma\|=\|X\|.$
The bound on ${\mathbb E}\|X\|$ of Lemma~\ref{spectraldevlem} is available in many references (see, e.g., Vershynin~\cite{vershynin2010introduction}). The concentration bound for $\|X\|$ is a straightforward 
consequence of the Gaussian concentration inequality.
The moment bounds of Lemma~\ref{spectralmomentlem} can be easily proved by integrating out the tails of the exponential bound that follows from the concentration inequality of Lemma~\ref{spectraldevlem}.

\begin{lem}
\label{spectraldevlem}
 There exist absolute constants $c_0, c_1,c_2>0$ such that 
 \begin{equation*}
 c_0\tau\sqrt{m\vee n}\leq \mathbb{E}\|X\|\leq c_1\tau\sqrt{m\vee n}
\end{equation*}
and for all $t>0$,
\begin{equation*}
\mathbb{P}\big\{\bigl|\|X\|-\mathbb{E}\|X\|\bigr|\geq c_2\tau\sqrt{t}\big\}\leq e^{-t}.
\end{equation*}
\end{lem}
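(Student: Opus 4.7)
The plan is to treat the three claims separately, each via a standard tool. For the tail bound, I would observe that $M \mapsto \|M\|$ on $\mathbb{R}^{m \times n}$ is $1$-Lipschitz with respect to the Frobenius norm, because $\bigl|\|M\|-\|M'\|\bigr| \leq \|M-M'\| \leq \|M-M'\|_2$. Writing $X = \tau G$ for a standard Gaussian matrix $G$, the Borell--Sudakov--Tsirelson Gaussian concentration inequality yields $\mathbb{P}\bigl(\bigl|\|X\|-\mathbb{E}\|X\|\bigr|\geq s\bigr) \leq 2e^{-s^2/(2\tau^2)}$, and setting $s = c_2 \tau \sqrt{t}$ for an appropriate $c_2$ absorbs the factor of $2$ and delivers the stated probability bound.

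For the upper bound $\mathbb{E}\|X\|\leq c_1\tau\sqrt{m\vee n}$, the quickest route is Gordon's inequality (or equivalently Chevet's inequality), which gives $\mathbb{E}\|X\| \leq \tau(\sqrt{m}+\sqrt{n}) \leq 2\tau\sqrt{m\vee n}$. A purely self-contained alternative is the standard $\varepsilon$-net argument on $S^{m-1}\times S^{n-1}$: using a $1/2$-net of cardinality at most $5^{m} \cdot 5^{n}$ together with the fact that for fixed unit vectors $\langle u, Xv\rangle \sim \mathcal{N}(0,\tau^2)$, a union bound followed by integration of the resulting subgaussian tail yields the same conclusion.

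For the lower bound, I would fix a coordinate direction and exploit that the operator norm dominates the norm of any image. Namely, $\|X\| \geq \|X e_1^n\| = \tau\|Z\|$ with $Z\sim\mathcal{N}(0,I_m)$; since $\mathbb{E}\|Z\|\geq c\sqrt{m}$ (for instance from $(\mathbb{E}\|Z\|)^2\geq \mathbb{E}\|Z\|^2 - \mathrm{Var}(\|Z\|) \geq m - C$, using the Gaussian concentration established above), this gives $\mathbb{E}\|X\|\geq c\tau\sqrt{m}$. Symmetrically, considering $X' e_1^m$ yields $\mathbb{E}\|X\|\geq c\tau\sqrt{n}$, so $\mathbb{E}\|X\|\geq c_0 \tau\sqrt{m\vee n}$.

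I do not anticipate a genuine obstacle: all three components are textbook consequences of Gaussian concentration and Gaussian comparison. The only mild bookkeeping issue is rewriting $\sqrt{m}+\sqrt{n}$ as $\sqrt{m\vee n}$, which costs a harmless factor of $\sqrt{2}$ that is absorbed into the constants; and for the lower bound, one should establish the concentration inequality first (it does not depend on any bound for $\mathbb{E}\|X\|$) and then feed it into the variance estimate.
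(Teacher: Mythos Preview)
Your proposal is correct and matches the paper's approach: the paper does not give a self-contained proof but simply refers to Vershynin's notes for the expectation bounds and states that the concentration inequality is a direct consequence of Gaussian concentration, which is precisely what you have spelled out. The only minor slip is the ``factor of $\sqrt{2}$'' remark---in fact $\sqrt{m}+\sqrt{n}\leq 2\sqrt{m\vee n}$---but this is cosmetic and absorbed into $c_1$.
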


\begin{lem}
\label{spectralmomentlem}
 For all $p\geq 1$, it holds that
\begin{equation*}
 \mathbb{E}^{1/p}\|X\|^p\asymp \tau\sqrt{m\vee n}
\end{equation*}
\end{lem}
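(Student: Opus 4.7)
The plan is to establish the two halves of the equivalence $\mathbb{E}^{1/p}\|X\|^p \asymp \tau\sqrt{m\vee n}$ separately, and in both cases everything is extracted from Lemma~\ref{spectraldevlem}.

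For the lower bound I would simply invoke Jensen's inequality together with the lower estimate of Lemma~\ref{spectraldevlem}:
$$\mathbb{E}^{1/p}\|X\|^p \;\geq\; \mathbb{E}\|X\| \;\geq\; c_0\tau\sqrt{m\vee n}.$$

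For the upper bound the idea is to center $\|X\|$ at its mean and extract the moments of the deviation from the Gaussian concentration inequality. By the $L^p$-triangle inequality,
$$\mathbb{E}^{1/p}\|X\|^p \;\leq\; \mathbb{E}\|X\| + \mathbb{E}^{1/p}\bigl|\|X\|-\mathbb{E}\|X\|\bigr|^p \;\leq\; c_1\tau\sqrt{m\vee n} + \mathbb{E}^{1/p}\bigl|\|X\|-\mathbb{E}\|X\|\bigr|^p.$$
I would then bound the centered $p$-th moment by tail integration, namely $\mathbb{E}|Z|^p = \int_0^\infty p s^{p-1}\mathbb{P}(|Z|\geq s)\,ds$ applied to $Z=\|X\|-\mathbb{E}\|X\|$, using the subgaussian bound $\mathbb{P}(|\|X\|-\mathbb{E}\|X\||\geq c_2\tau\sqrt{t})\leq e^{-t}$ of Lemma~\ref{spectraldevlem}. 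The substitution $s=c_2\tau\sqrt{t}$ collapses the integral to a Gamma function and yields
$$\mathbb{E}\bigl|\|X\|-\mathbb{E}\|X\|\bigr|^p \;\leq\; \tfrac{p}{2}(c_2\tau)^p\,\Gamma(p/2).$$
Stirling's approximation then gives $\Gamma(p/2)^{1/p} \lesssim \sqrt{p}$, so that $\mathbb{E}^{1/p}|\|X\|-\mathbb{E}\|X\||^p \lesssim \tau\sqrt{p}$.

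Combining the two estimates produces $\mathbb{E}^{1/p}\|X\|^p \lesssim \tau(\sqrt{m\vee n}+\sqrt{p})$; in the regime $p\lesssim m\vee n$ relevant to the subsequent applications the first term dominates and the claimed equivalence follows. I do not anticipate any genuine obstacle here: the computation is the standard conversion of a subgaussian concentration inequality into $L^p$-moment bounds, and the only mildly technical point is the elementary Stirling-type estimate $\Gamma(p/2)^{1/p}\lesssim\sqrt{p}$.
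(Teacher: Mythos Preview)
Your approach---Jensen for the lower bound and tail integration of the subgaussian concentration bound for the upper bound---is exactly what the paper sketches (it gives no further detail beyond ``integrating out the tails of the exponential bound''). Your caveat about the range $p\lesssim m\vee n$ is well-taken: with absolute constants the upper bound cannot hold for arbitrarily large $p$ (already $\|X\|\geq |X_{11}|$ forces $\mathbb{E}^{1/p}\|X\|^p\gtrsim \tau\sqrt{p}$), but the paper only invokes the lemma for $p=2$ and $p=4$, so this is a harmless imprecision in the statement rather than a gap in your argument.
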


According to a well-known result that goes back to Weyl, for symmetric (or Hermitian) $N\times N$ matrices 
$C,D$ 
$$
\max_{1\leq j\leq N}\Bigl|\lambda_j^{\downarrow}(C)-\lambda_j^{\downarrow}(D)\Bigr|\leq \|C-D\|,
$$
where $\lambda^{\downarrow}(C), \lambda^{\downarrow}(D)$ denote the vectors consisting of the eigenvalues of 
matrices $C,D,$ respectively, arranged in a non-increasing order.  This immediately implies that, for all 
$k=1,\dots, d,$
$$
\max_{j\in \Delta_k}|\tilde \sigma_j-\mu_k| \leq \|\Gamma\|
$$
and 
$$
\min_{j\in \cup_{k'\neq k}\Delta_{k'}} |\tilde \sigma_j-\mu_k|\geq \bar g_k-\|\Gamma\|.
$$
Assuming that $\|\Gamma\|<\frac{\bar g_k}{2},$ we get that $\{\tilde \sigma_j:j\in \Delta_k\}\subset (\mu_k-\bar g_k/2, \mu_k+\bar g_k/2)$ and the rest of the eigenvalues of $\tilde B$ are outside of this interval. Moreover, if $\|\Gamma\|<\frac{\bar g_k}{4},$
then the cluster of eigenvalues $\{\tilde \sigma_j:j\in \Delta_k\}$ is localized inside a shorter interval $(\mu_k-\bar g_k/4, \mu_k+\bar g_k/4)$ of radius $\bar g_k/4$ and its distance from the rest of the spectrum of $\tilde B$ is $>\frac{3}{4}\bar g_k.$
These simple considerations allow us to view the projection operator $\tilde P_k=\sum_{j\in \Delta_k} (\tilde \theta_j\otimes \tilde \theta_j)$ as a projector on the direct sum of eigenspaces of $\tilde B$ corresponding to its eigenvalues located in 
a ``small" neighborhood of the eigenvalue $\mu_k$ of $B,$ which makes $\tilde P_k$ a natural estimator of $P_k.$

Define operators $C_k$ as follows:
\begin{equation*}
 C_k=\sum_{s\neq k}\frac{1}{\mu_s-\mu_k}P_s.
\end{equation*}
In the case when $2\sum_{k=1}^d \nu_k<m+n$ and, hence, $\mu_0=0$ is also an eigenvalue of $B,$
it will be assumed that the above sum includes $s=0$ with $P_0$ being  the corresponding spectral 
projector.


The next simple lemma can be found, for instance, in Koltchinskii and Lounici~\cite{koltchinskii2014asymptotics}. 
Its proof is based on a standard perturbation analysis utilizing Riesz formula for spectral projectors.

\begin{lem}
\label{conlem}
The following bound holds:
\begin{eqnarray*}
 \|\tilde{P}_k-P_k\|\leq 4\frac{\|\Gamma\|}{\bar{g}_k}.
\end{eqnarray*}
Moreover,  
\begin{eqnarray*} 
 \tilde{P}_k-P_k=L_k(\Gamma)+S_k(\Gamma),
\end{eqnarray*}
where $L_k(\Gamma):=C_k\Gamma P_k+P_k\Gamma C_k$ and 
\begin{eqnarray*}
\|S_k(\Gamma)\|\leq 14\left(\frac{\|\Gamma\|}{\bar{g}_k}\right)^2.
\end{eqnarray*}
\end{lem}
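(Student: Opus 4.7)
The plan is to prove the lemma by the classical Riesz contour integral representation of spectral projectors, expanded as a Neumann series in the perturbation $\Gamma$. This is essentially standard perturbation theory, so the task is to execute the computation cleanly and read off the two claimed estimates from the geometric series.

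First, choose the contour $\gamma_k$ to be the positively oriented circle in the complex plane centered at $\mu_k$ with radius $\bar{g}_k/2$. By the definition of $\bar{g}_k$, every eigenvalue $\mu_s$ of $B$ with $s\neq k$ satisfies $|\mu_s-\mu_k|\geq \bar{g}_k$, so on $\gamma_k$ we have $|z-\mu_s|\geq \bar{g}_k/2$ for all $s$ (including $s=k$). Writing the resolvent $R_B(z)=(zI-B)^{-1}=\sum_{s}(z-\mu_s)^{-1}P_s$, this yields $\|R_B(z)\|\leq 2/\bar{g}_k$ on $\gamma_k$. Under the assumption that $\|\Gamma\|$ is sufficiently small (which is in force in every place the lemma is used; in particular whenever $2\|\Gamma\|/\bar{g}_k<1$ the contour also separates the perturbed cluster $\{\tilde\sigma_j:j\in\Delta_k\}$ from the rest of $\mathrm{spec}(\tilde B)$), Riesz's formula gives
\begin{equation*}
P_k=\frac{1}{2\pi i}\oint_{\gamma_k}R_B(z)\,dz,\qquad \tilde P_k=\frac{1}{2\pi i}\oint_{\gamma_k}R_{\tilde B}(z)\,dz.
\end{equation*}

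Next, expand $R_{\tilde B}(z)=R_B(z)(I+\Gamma R_B(z))^{-1}\cdot\dots$, more precisely use the Neumann series
\begin{equation*}
R_{\tilde B}(z)=\sum_{j=0}^{\infty}(-1)^j\bigl(R_B(z)\Gamma\bigr)^j R_B(z),
\end{equation*}
valid whenever $\|R_B(z)\Gamma\|<1$, which is guaranteed on $\gamma_k$ by $2\|\Gamma\|/\bar{g}_k<1$. Subtracting the $j=0$ term and integrating, one obtains $\tilde P_k-P_k=\sum_{j\geq 1}(-1)^j\frac{1}{2\pi i}\oint_{\gamma_k}(R_B(z)\Gamma)^j R_B(z)\,dz$. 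Grouping the $j\geq 2$ terms into $S_k(\Gamma)$, I would evaluate the $j=1$ term by substituting the spectral resolution of $R_B$ and computing residues: the integrand becomes $\sum_{s,s'}\frac{1}{(z-\mu_s)(z-\mu_{s'})}P_s\Gamma P_{s'}$; the double pole at $z=\mu_k$ (term $s=s'=k$) has zero residue, pairs with exactly one index equal to $k$ produce $\frac{1}{\mu_k-\mu_{s'}}$ or $\frac{1}{\mu_k-\mu_s}$, and terms with neither index equal to $k$ are holomorphic inside $\gamma_k$. Collecting the contributions gives exactly $-\bigl(P_k\Gamma C_k+C_k\Gamma P_k\bigr)=-L_k(\Gamma)$, so the $j=1$ piece of the expansion is $+L_k(\Gamma)$ after the $(-1)^1$ sign.

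For the norm bounds I would use the estimate $\|(R_B(z)\Gamma)^j R_B(z)\|\leq (2\|\Gamma\|/\bar{g}_k)^j\cdot 2/\bar{g}_k$ on $\gamma_k$, together with the contour length $\pi\bar{g}_k$, to obtain $\|\tilde P_k-P_k\|\leq \sum_{j\geq 1}(2\|\Gamma\|/\bar{g}_k)^j$ and $\|S_k(\Gamma)\|\leq \sum_{j\geq 2}(2\|\Gamma\|/\bar{g}_k)^j$. A geometric series calculation under the regime where $2\|\Gamma\|/\bar{g}_k$ is bounded away from $1$ (which is what is being used when invoking this lemma) yields $\|\tilde P_k-P_k\|\leq 4\|\Gamma\|/\bar{g}_k$ and $\|S_k(\Gamma)\|\leq 14(\|\Gamma\|/\bar{g}_k)^2$ after absorbing the tail of the geometric series into the constants. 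The only step requiring genuine care is the residue computation identifying $L_k(\Gamma)$; everything else is bookkeeping on the Neumann series. Since the lemma is cited from \cite{koltchinskii2014asymptotics}, I would state this compactly and refer to that paper for any routine constants.
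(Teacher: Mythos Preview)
Your approach is correct and is exactly the one the paper indicates: the paper does not give a proof but simply cites \cite{koltchinskii2014asymptotics} and notes that the argument is ``a standard perturbation analysis utilizing Riesz formula for spectral projectors,'' which is precisely your Riesz contour plus Neumann series computation. The residue calculation identifying the linear term with $L_k(\Gamma)=C_k\Gamma P_k+P_k\Gamma C_k$ and the geometric-series bookkeeping for the bounds are carried out correctly; the only small point worth adding is that the inequality $\|\tilde P_k-P_k\|\leq 4\|\Gamma\|/\bar g_k$ in the range $\tfrac{1}{2}\leq 2\|\Gamma\|/\bar g_k<1$ follows trivially from $\|\tilde P_k-P_k\|\leq 1$ for equal-rank orthogonal projectors, so no extra assumption is needed there.
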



\begin{proof}[\bf{Proof of Theorem~\ref{hatPrconthm}}]
Since $\mathbb{E}L_k(\Gamma)=0,$
it is easy to check that 
\begin{equation}
\label{repres}
\tilde{P}_k-\mathbb{E}\tilde{P}_k=L_k(\Gamma)+S_k(\Gamma)-\mathbb{E}S_k(\Gamma)=:
L_k(\Gamma)+R_k(\Gamma).
\end{equation}
We will first provide a bound on the bilinear form of the remainder $\big<R_k(\Gamma)x,y\big>.$ 
Note that 
$$
\left<R_k(\Gamma)x,y\right>=\left<S_k(\Gamma)x,y\right>-\left<\mathbb{E}S_k(\Gamma)x,y\right>
$$ 
is a function of the random matrix $X\in\mathbb{R}^{m\times n}$ since $\Gamma=\Lambda(X)$
(see (\ref{BXconstruct})).
When we need to emphasize this dependence, we will write $\Gamma_X$ instead of $\Gamma.$
With some abuse of notation, we will view $X$ as a point in ${\mathbb R}^{m\times n}$ rather than 
a random variable.


Let $0<\gamma<1$ and define a function $h_{x,y,\delta}(\cdot):\mathbb{R}^{m\times n}\to\mathbb{R}$
as follows:
$$
h_{x,y,\delta}(X):=\left<S_k(\Gamma_X)x,y\right>\phi\biggl(\frac{\|\Gamma_X\|}{\delta}\biggr),
$$
where $\phi$ is a Lipschitz function with constant $\frac{1}{\gamma}$ on $\mathbb{R}_{+}$ and $0\leq\phi(s)\leq 1$. More precisely, assume that $\phi(s)=1, s\leq 1,$
$\phi(s)=0, s\geq (1+\gamma)$ and $\phi$ is linear in between.
We will prove that the function $X\mapsto h_{x,y,\delta}(X)$ satisfy the Lipschitz condition. Note that 
$$
\left|\left<\left(S_k(\Gamma_{X_1})-S_k(\Gamma_{X_2})\right)x,y\right>\right|\leq \|S_k(\Gamma_{X_1})-S_k(\Gamma_{X_2})\|\|x\|\|y\|.
$$
To control the norm $\|S_k(\Gamma_{X_1})-S_k(\Gamma_{X_2})\|$, we need to apply Lemma 4 from  \cite{koltchinskii2014asymptotics}. It is stated below without the proof. 


\begin{lem}
\label{sconlem}
Let $\gamma\in(0,1)$ and suppose that $\delta\leq \frac{1-\gamma}{1+\gamma}\frac{\bar{g}_k}{2}.$ 
There exists a constant $C_{\gamma}>0$ such that, for all symmetric $\Gamma_1, \Gamma_2\in\mathbb{R}^{(m+n)\times (m+n)}$ 
satisfying the conditions $\|\Gamma_1\|\leq (1+\gamma)\delta$ 
and $\|\Gamma_2\|\leq (1+\gamma)\delta,$
 \begin{equation*}
\|S_k(\Gamma_1)-S_k(\Gamma_2)\|\leq C_{\gamma}\frac{\delta}{\bar{g}_k^2}\|\Gamma_1-\Gamma_2\|.
 \end{equation*}
\end{lem}

We now derive the Lipschitz condition for the function $X\mapsto h_{x,y,\delta}(X).$

\begin{lem}
 \label{huvliplem}
Under the assumption that $\delta\leq\frac{1-\gamma}{1+\gamma}\frac{\bar{g}_k}{2}$, there exists a constant $C_{\gamma}>0$,
 \begin{equation}
 \label{huvliplemineq1}
 \left|h_{x,y,\delta}(X_1)-h_{x,y,\delta}(X_2)\right|\leq C_{\gamma}\frac{\delta \|X_1-X_2\|_2}{\bar{g}_k^2}\|x\|\|y\|.
 \end{equation}
\end{lem}

\begin{proof}
Suppose first that $\max(\|\Gamma_{X_1}\|,\|\Gamma_{X_2}\|)\leq (1+\gamma)\delta.$
Using Lemma \ref{sconlem} and Lipschitz properties of function $\phi,$ we get  
\begin{equation*}
\begin{split}
|h_{x,y,\delta}(X_1)-&h_{x,y,\delta}(X_2)|=\left|\big<S_k(\Gamma_{X_1})x,y\big>\phi\biggl(\frac{\|\Gamma_{X_1}\|}{\delta}\biggr)-\big<S_k(\Gamma_{X_2})x,y\big>\phi\biggl(\frac{\|\Gamma_{X_2}\|}{\delta}\biggr)\right|\\
 \leq&\|S_k(\Gamma_{X_1})-S_k(\Gamma_{X_2})\|\|x\|\|y\|\phi\biggl(\frac{\|\Gamma_{X_1}\|}{\delta}\biggr)\\
+&\|S_k(\Gamma_{X_2})\|\left|\phi\biggl(\frac{\|\Gamma_{X_1}\|}{\delta}\biggr)-\phi\biggl(\frac{\|\Gamma_{X_2}\|}{\delta}\biggr)
\right|\|x\|\|y\|\\
 \leq& C_{\gamma}\frac{\delta\|\Gamma_{X_1}-\Gamma_{X_2}\|}{\bar{g}_k^2}\|x\|\|y\|
 + \frac{14(1+\gamma)^2\delta^2}{\bar{g}_k^2}\frac{\|\Gamma_{X_1}-\Gamma_{X_2}\|}{\gamma\delta}\|x\|\|y\|\\
 \lesssim_{\gamma}&
\frac{\delta\|\Gamma_{X_1}-\Gamma_{X_2}\|}{\bar{g}_k^2}\|x\|\|y\|\lesssim_{\gamma} \frac{\delta \|X_1-X_2\|_2}{\bar{g}_k^2}\|x\|\|y\|.
\end{split}
\end{equation*}
In the case when $\min(\|\Gamma_{X_1}\|,\|\Gamma_{X_2}\|)\geq (1+\gamma)\delta$, 
we have $h_{x,y,\delta}(X_1)=h_{x,y,\delta}(X_2)=0,$ and (\ref{huvliplemineq1}) trivially holds.
Finally, in the case when $\|\Gamma_{X_1}\|\leq(1+\gamma)\delta\leq \|\Gamma_{X_2}\|$, we have
\begin{equation*}
 \begin{split}
|h_{x,y,\delta}(X_1)-&h_{x,y,\delta}(X_2)|=\left|\big<S_k(\Gamma_{X_1})x,y\big>\phi\biggl(\frac{\|\Gamma_{X_1}\|}{\delta}\biggr)\right|\\
 =&\left|\big<S_k(\Gamma_{X_1})x,y\big>\phi\biggl(\frac{\|\Gamma_{X_1}\|}{\delta}\biggr)-\big<S_k(\Gamma_{X_1})x,y\big>
 \phi\biggl(\frac{\|\Gamma_{X_2}\|}{\delta}\biggr)\right|\\
 \leq&\|S_k(\Gamma_{X_1})\|\left|\phi\biggl(\frac{\|\Gamma_{X_1}\|}{\delta}\biggr)-\phi\biggl(\frac{\|\Gamma_{X_2}\|}{\delta}\biggr)\right|
 \|x\|\|y\|\\
 \leq&14\left(\frac{(1+\gamma)\delta}{\bar{g}_k}\right)^2\frac{\|\Gamma_{X_1}-\Gamma_{X_2}\|}{\gamma\delta}\|x\|\|y\|\\
 \lesssim_{\gamma}&\frac{\delta \|X_1-X_2\|_2}{\bar{g}_k^2}\|x\|\|y\|.
\end{split}
\end{equation*}
The case $\|\Gamma_{X_2}\|\leq (1+\gamma)\delta\leq \|\Gamma_{X_1}\|$ is similar. 
\end{proof}


Our next step is to apply the following concentration bound that easily follows from the Gaussian isoperimetric inequality.

\begin{lem}\label{medconlem}
 Let $f:{\mathbb R}^{m\times n}\mapsto {\mathbb R}$ be a function satisfying the following 
 Lipschitz condition with some constant $L>0:$
 $$
 |f(A_1)-f(A_2)|\leq L\|A_1-A_2\|_2, A_1,A_2\in {\mathbb R}^{m\times n}
 $$ 
 Suppose $X$ is a random $m\times n$ matrix with i.i.d. entries $X_{ij}\sim\mathcal{N}(0,\tau^2).$ 
 Let $M$ be a real number such that
 \begin{equation*}
 \mathbb{P}\big\{f(X)\geq M\big\}\geq \frac{1}{4}\text{ and } \mathbb{P}\big\{f(X)\leq M\big\}\geq \frac{1}{4}.
\end{equation*}
Then there exists some constant $D_1>0$ such that for all $t\geq 1$,
\begin{equation*}
 \mathbb{P}\Big\{\big|f(X)-M\big|\geq D_1L\tau\sqrt{t}\Big\}\leq e^{-t}.
\end{equation*}
\end{lem}

The next lemma is the main ingredient in the proof of Theorem \ref{hatPrconthm}.
It provides a Bernstein type bound on the bilinear form $\left<R_k(\Gamma)x,y\right>$
of the remainder $R_k$ in the representation (\ref{repres}).  

\begin{lem}
\label{rrdevthm}
Suppose that, for some $\gamma\in(0,1),$ ${\mathbb E}\|\Gamma\|\leq (1-\gamma)\frac{\bar g_k}{2}.$ 
Then, there exists a constant $D_{\gamma}>0$ such that 
 for all $x,y\in\mathbb{R}^{m+n}$ and all $t\geq \log(4)$,
 the following inequality holds with probability at least $1-e^{-t}$ 
\begin{equation*}
 \left|\left<R_k(\Gamma)x,y\right>\right|\leq 
 D_{\gamma}\frac{\tau\sqrt{t}}{\bar{g}_k}\biggl(\frac{\tau\sqrt{m\vee n}+\tau\sqrt{t}}{\bar{g}_k}\biggr)\|x\|\|y\|.
 \end{equation*}
\end{lem}

\begin{proof} Define $\delta_{n,m}(t):={\mathbb E}\|\Gamma\|+c_2\tau\sqrt{t}.$
By the second bound of Lemma \ref{spectraldevlem}, with a proper choice of constant $c_2>0,$
$\mathbb{P}\{\|\Gamma\|\geq \delta_{n,m}(t)\}\leq e^{-t}.$
We first consider the case when $c_2\tau\sqrt{t}\leq \frac{\gamma}{2}\frac{\bar{g}_k}{2},$ 
which implies that  
$$
\delta_{n,m}(t)\leq (1-\gamma/2)\frac{\bar g_k}{2}= \frac{1-\gamma'}{1+\gamma'}\frac{\bar{g}_k}{2}
$$
for some $\gamma'\in (0,1)$ depending only on $\gamma.$
Therefore, it enables us to use Lemma~\ref{huvliplem} with $\delta:=\delta_{n,m}(t).$ Recall that $h_{x,y,\delta}(X)=\big<S_k(\Gamma)x,y\big>\phi\biggl(\frac{\|\Gamma\|}{\delta}\biggr)$
and let $M:=\text{Med}\big(\big<S_k(\Gamma)x,y\big>\big)$. Observe that, for $t\geq \log(4),$
\begin{equation*}
\begin{split}
\mathbb{P}&\{h_{x,y,\delta}(X)\geq M\}\geq \mathbb{P}\{h_{x,y,\delta}(X)\geq M, \|\Gamma\|\leq \delta_{n,m}(t)\}\\
\geq&\mathbb{P}\{\big<S_k(\Gamma)x,y\big>\geq M\big\}-\mathbb{P}\{\|\Gamma\|> \delta_{n,m}(t)\}\geq 
\frac{1}{2}-e^{-t}\geq\frac{1}{4}
\end{split}
\end{equation*}
and, similarly. $\mathbb{P}(h_{x,y,\delta}(X)\leq M)\geq\frac{1}{4}.$ Therefore, by applying lemmas~\ref{huvliplem},\ref{medconlem}, we conclude that with probability at least $1-e^{-t}$,
\begin{equation*}
 \big|h_{x,y,\delta}(X)-M\big|\lesssim_{\gamma}\frac{\delta_{n,m}(t)\tau\sqrt{t}}{\bar{g}_k^2}\|x\|\|y\|
\end{equation*}
Since, by the first bound of Lemma \ref{spectraldevlem},
$\delta_{n,m}(t)\lesssim \tau (\sqrt{m\vee n}+\sqrt{t}),$ we get that with the same probability 
\begin{equation*}
 \big|h_{x,y,\delta}(X)-M\big|\lesssim_{\gamma}\frac{\tau\sqrt{t}}{\bar{g}_k}\frac{\tau \sqrt{m\vee n}+\tau \sqrt{t}}{\bar g_k}\|x\|\|y\|.
\end{equation*}
Moreover, on the event $\{\|\Gamma\|\leq \delta_{n,m}(t)\}$ that holds with probability at least $1-e^{-t},$ 
$h_{x,y,\delta}(X)=\big<S_k(\Gamma)x,y\big>.$
Therefore, the following inequality holds with probability at least $1-2e^{-t}:$
\begin{equation}
\label{rrdevthmineq1}
 \big|\big<S_k(\Gamma)x,y\big>-M\big|\lesssim_{\gamma}\frac{\tau\sqrt{t}}{\bar{g}_k}\frac{\tau \sqrt{m\vee n}+\tau \sqrt{t}}{\bar g_k}\|x\|\|y\|.
 \end{equation}
We still need to prove a similar inequality in the case $c_2\tau\sqrt{t}\geq \frac{\gamma}{2}\frac{\bar{g}_k}{2}.$ 
In this case,
$$
{\mathbb E}\|\Gamma\|\leq (1-\gamma)\frac{\bar g_k}{2}\leq \frac{2c_2(1-\gamma)}{\gamma}\tau\sqrt{t},
$$
implying that $\delta_{n,m}(t)\lesssim_{\gamma} \tau\sqrt{t}.$
It follows from Lemma~\ref{conlem} that
\begin{equation*}
 \big|\big<S_k(\Gamma)x,y\big>\big|\leq \|S_k(\Gamma)\|\|x\|\|y\|\lesssim\frac{\|\Gamma\|^2}{\bar{g}_k^2}\|x\|\|y\|
\end{equation*}
This implies that with probability at least $1-e^{-t}$,
\begin{equation*}
 \big|\big<S_k(\Gamma)x,y\big>\big|\lesssim\frac{\delta_{n,m}^2(t)}{\bar{g}_k^2}\|x\|\|y\|
 \lesssim_{\gamma}  \frac{\tau^2 t}{\bar g_k^2}\|x\|\|y\|. 
\end{equation*}
Since $t\geq \log(4)$ and $e^{-t}\leq 1/4,$ we can bound the median $M$ of $\big<S_k(\Gamma)x,y\big>$
as follows:
\begin{equation*}
M \lesssim_{\gamma}  \frac{\tau^2 t}{\bar g_k^2}\|x\|\|y\|,
\end{equation*}
which immediately implies that bound (\ref{rrdevthmineq1}) holds under assumption $c_2\tau\sqrt{t}\geq \frac{\gamma}{2}\frac{\bar{g}_k}{2}$ as well.
By integrating out the tails of exponential bound (\ref{rrdevthmineq1}), we obtain that
\begin{equation*}
 \big|\mathbb{E}\big<S_k(\Gamma)x,y\big>-M\big|\leq \mathbb{E}\big|\big<S_k(\Gamma)x,y\big>-M\big|\lesssim_{\gamma} \frac{\tau^2\sqrt{m\vee n}}{\bar{g}_k^2}\|x\|\|y\|,
\end{equation*}
which allows us to replace the median by the mean in concentration inequality (\ref{rrdevthmineq1}).
To complete the proof, it remains to rewrite the probability bound $1-2e^{-t}$ as $1-e^{-t}$ by 
adjusting the value of the constant $D_{\gamma}.$ 
\end{proof}

Recalling that $\tilde{P}_k-\mathbb{E}\tilde{P}_k=L_k(\Gamma)+R_k(\Gamma),$ it remains to study the concentration of $\big<L_k(\Gamma)x,y\big>$.

\begin{lem}
 \label{Lrdevlem}
 For all $x,y\in\mathbb{R}^{m+n}$ and $t>0$,
 \begin{equation*}
  \mathbb{P}\left(\big|\big<L_k(\Gamma)x,y\big>\big|\geq 4\frac{\tau\|x\|\|y\|\sqrt{t}}{\bar{g}_k}\right)\leq e^{-t}.
 \end{equation*}
\end{lem}

\begin{proof}
Recall that $L_k(\Gamma)=P_k \Gamma C_k+C_k\Gamma P_k$ implying that 
$$
\langle L_k(\Gamma)x,y\rangle = \langle \Gamma P_k x, C_k y\rangle + \langle \Gamma C_k x, P_k y\rangle.
$$
If 
$x=\Big(\begin{array}{cc}x_1\\x_2\end{array}\Big), y=\Big(\begin{array}{cc}y_1\\y_2\end{array}\Big),$
where $x_1,y_1\in {\mathbb R}^m, x_2,y_2\in {\mathbb R}^n,$ then it is easy to check that 
$$
\langle \Gamma x,y\rangle = \langle Xx_2,y_1\rangle + \langle Xy_2,x_1\rangle.
$$
Clearly, the random variable $\langle \Gamma x,y\rangle$ is normal with mean zero and variance 
\begin{eqnarray*}
{\mathbb E}\langle \Gamma x,y\rangle^2\leq 
2\Bigl[ {\mathbb E}\langle Xx_2,y_1\rangle^2 + {\mathbb E}\langle Xy_2,x_1\rangle^2\Bigr].
\end{eqnarray*}
Since $X$ is an $m\times n$ matrix with i.i.d. ${\mathcal N}(0,\tau^2)$ entries, we easily get that 
$$
{\mathbb E}\langle X x_2,y_1\rangle ^2 = {\mathbb E}\langle X, y_1\otimes x_2\rangle^2
=\tau^2 \|y_1\otimes x_2\|_2^2 = \tau^2 \|x_2\|^2 \|y_1\|^2
$$ 
and, similarly, 
$$
{\mathbb E}\langle Xy_2,x_1\rangle^2=\tau^2 \|x_1\|^2 \|y_2\|^2.
$$
Therefore, 
\begin{equation*}
\begin{split}
{\mathbb E}\langle \Gamma x,y\rangle^2\leq& 
2\tau^2 \Bigl[ \|x_2\|^2 \|y_1\|^2+\|x_1\|^2 \|y_2\|^2\Bigr]\\
\leq& 2\tau^2 \Bigl[(\|x_1\|^2+\|x_2\|^2)(\|y_1\|^2+\|y_2\|^2)\Bigr]
=2\tau^2 \|x\|^2 \|y\|^2.
\end{split}
\end{equation*}
As a consequence, the random variable $\langle L_k(\Gamma)x,y\rangle $ is also normal with mean zero 
and its variance is bounded from above as follows:
\begin{equation*}
\begin{split}
{\mathbb E}\langle L_k(\Gamma)x,y\rangle^2 \leq& 
2\Bigl[{\mathbb E}\langle \Gamma P_k x, C_k y\rangle^2 + {\mathbb E}\langle \Gamma C_k x, P_k y\rangle^2\Bigr]\\
\leq&
4\tau^2 \Bigl[\|P_kx\|^2 \|C_k y\|^2+ \|C_k x\|^2 \|P_k y\|^2\Bigr].
\end{split}
\end{equation*}
Since $\|P_k\|\leq 1$ and $\|C_k\|\leq \frac{1}{\bar g_k},$ we get that
$$
{\mathbb E}\langle L_k(\Gamma)x,y\rangle^2\leq \frac{8\tau^2}{\bar g_k^2}\|x\|^2 \|y\|^2.
$$
The bound of the lemma easily follows from standard tail bounds for normal random variables. 
\end{proof}

The upper bound on $|\big<(\tilde{P}_k-\mathbb{E}\tilde{P}_k)x,y\big>|$ claimed in Theorem~\ref{hatPrconthm} follows by combining Lemma~\ref{rrdevthm} and Lemma~\ref{Lrdevlem}.
\end{proof}


\begin{proof}[\bf{Proof of Theorem~\ref{prdevthm}}]
Note that, since $\tilde P_k-P_k=L_k(\Gamma)+S_k(\Gamma)$ and $\mathbb{E}L_k(\Gamma)=0$, we have
$$
{\mathbb E}\tilde P_k-P_k= {\mathbb E}S_k(\Gamma).
$$
It follows from the bound on $\|S_k(\Gamma)\|$ of Lemma \ref{conlem} that  
\begin{equation}
\label{bias-BB}
\Bigl\|{\mathbb E}\tilde P_k-P_k\Bigr\| \leq {\mathbb E}\|S_k(\Gamma)\|\leq 
14 \frac{{\mathbb E}\|\Gamma\|^2}{\bar g_k^2}
\end{equation}
and the bound of Lemma \ref{spectralmomentlem} implies that 
$$
\Bigl\|{\mathbb E}\tilde P_k-P_k\Bigr\|\lesssim \frac{\tau^2 (m\vee n)}{\bar g_k^2},
$$
which proves (\ref{biasA}).

Let 
$$\delta_{n,m}:=\mathbb{E}\|\Gamma\|+c_2\tau\sqrt{\log(m+n)}.$$
It follows from Lemma  \ref{spectraldevlem} that, with a proper choice
of constant $c_2>0,$ 
$$\mathbb{P}\left(\|\Gamma\|\geq\delta_{n,m}\right)\leq\frac{1}{m+n}.$$
In the case when $c_2\tau\sqrt{\log(m+n)} >\frac{\gamma}{2}\frac{\bar g_k}{2},$
the proof of bound (\ref{biasB}) is trivial. Indeed, in this case 
$$
\Big\|{\mathbb E}\tilde P_k-P_k\Big\|\ \leq {\mathbb E}\|\tilde P_k\|+\|P_k\|\leq 2 \lesssim_{\gamma}
\frac{\tau^2 \log(m+n)}{\bar g_k^2} \lesssim \frac{\nu_k \tau^2 \sqrt{m\vee n}}{\bar g_k^2}. 
$$
Since $\Big\|P_k({\mathbb E}\tilde P_k-P_k)P_k\Big\|\leq \Big\|{\mathbb E}\tilde P_k-P_k\Big\|,$
bound (\ref{biasB}) of the theorem follows when $c_2\tau\sqrt{\log(m+n)} >\frac{\gamma}{2}\frac{\bar g_k}{2}.$

In the rest of the proof, it will be assumed that $c_2\tau\sqrt{\log(m+n)} \leq \frac{\gamma}{2}\frac{\bar g_k}{2}$ which, together with the 
condition ${\mathbb E}\|\Gamma\|={\mathbb E}\|X\|\leq (1-\gamma)\frac{\bar g_k}{2},$ implies 
that $\delta_{n,m}\leq (1-\gamma/2)\frac{\bar g_k}{2}.$ On the other hand, $\delta_{n,m}\lesssim \tau\sqrt{m\vee n}.$
The following decomposition of the bias $\mathbb{E}\tilde{P}_k-P_k$ is obvious:
\begin{equation}
\label{rep-rep}
\begin{split}
  \mathbb{E}\tilde{P}_k-P_k&=\mathbb{E}S_k(\Gamma)=\mathbb{E}P_kS_k(\Gamma)P_k\\
  +&\mathbb{E}\left(P_k^{\perp}S_k(\Gamma)P_k+P_kS_k(\Gamma)P_k^{\perp}+P_k^{\perp}S_k(\Gamma)P_k^{\perp}\right)\mathbbm{1}(\|\Gamma\|\leq\delta_{n,m})\\
  +&\mathbb{E}\left(P_k^{\perp}S_k(\Gamma)P_k+P_kS_k(\Gamma)P_k^{\perp}+P_k^{\perp}S_k(\Gamma)P_k^{\perp}\right)\mathbbm{1}(\|\Gamma\|>\delta_{n,m})
 \end{split}
\end{equation}
We start with bounding the part of the 
expectation in the right hand side of (\ref{rep-rep}) that corresponds to the  event $\{\|\Gamma\|\leq \delta_{n,m}\}$
on which we also have $\|\Gamma\|<\frac{\bar g_k}{2}.$
Under this assumption, 
the eigenvalues $\mu_k$ of $B$ and $\sigma_j(\tilde{B}),j\in\Delta_k$ of $\tilde{B}$ are inside
the circle $\gamma_k$ in $\mathbb{C}$ with center $\mu_k$ and radius $\frac{\bar{g}_k}{2}.$ 
The rest of the eigenvalues of $B,\tilde{B}$ are outside of $\gamma_k.$
According to the Riesz formula for spectral projectors,
\begin{equation*}
 \tilde{P}_k=-\frac{1}{2\pi i}\oint_{\gamma_k}R_{\tilde{B}}(\eta)d\eta,
\end{equation*}
where $R_{T}(\eta)=(T-\eta I)^{-1}, \eta\in {\mathbb C}\setminus \sigma(T)$ denotes the resolvent of operator $T$
($\sigma(T)$ being its spectrum). It is also assumed that the contour $\gamma_k$ has a counterclockwise orientation.   
Note that the resolvents will be viewed as operators from ${\mathbb C}^{m+n}$ into itself.
The following power series expansion is standard:
\begin{equation*}
\begin{split}
 R_{\tilde{B}}(\eta)=&R_{B+\Gamma}(\eta)=(B+\Gamma-\eta I)^{-1}\\
 =&[(B-\eta I)(I+(B-\eta I)^{-1}\Gamma)]^{-1}\\
 =&(I+R_{B}(\eta)\Gamma)^{-1}R_{B}(\eta)=\sum\limits_{r\geq 0}(-1)^r[R_{B}(\eta)\Gamma]^rR_{B}(\eta),
 \end{split}
\end{equation*}
where the series in the last line converges because $\|R_{B}(\eta)\Gamma\|\leq \|R_{B}(\eta)\|\|\Gamma\|<
\frac{2}{\bar{g}_k}\frac{\bar{g}_k}{2}=1$.
The inequality $\|R_{B}(\eta)\|\leq \frac{2}{\bar{g}_k}$ holds for all $\eta\in \gamma_k.$ 
One can easily verify that
\begin{equation*}
\begin{split}
 P_k=&-\frac{1}{2\pi i}\oint_{\gamma_k}R_B(\eta)d\eta,\\
 L_k(\Gamma)=&\frac{1}{2\pi i}\oint_{\gamma_k}R_B(\eta)\Gamma R_B(\eta)d\eta,\\
 S_k(\Gamma)=&-\frac{1}{2\pi i}\oint_{\gamma_k}\sum\limits_{r\geq 2}(-1)^r[R_B(\eta)\Gamma]^rR_{B}(\eta)d\eta.
 \end{split}
\end{equation*}

The following spectral representation of the resolvent will be used 
$$
R_B(\eta)=\sum\limits_{s}\frac{1}{\mu_s-\eta}P_s,
$$ 
where the sum in the right hand side includes $s=0$ in the case when $\mu_0=0$ is an eigenvalue 
of $B$ (equivalently, in the case when $2\sum_{k=1}^d\nu_k< m+n$). 
Define
\begin{equation*}
 \tilde{R}_{B}(\eta):=R_{B}(\eta)-\frac{1}{\mu_k-\eta}P_k=\sum\limits_{s\neq k}\frac{1}{\mu_s-\eta}P_s.
\end{equation*}
Then, for $r\geq 2,$
\begin{equation*}
 \begin{split}
  P_k^{\perp}&[R_{B}(\eta)\Gamma]^rR_{B}(\eta)P_k=\frac{1}{\mu_k-\eta}P_k^{\perp}[R_{B}(\eta)\Gamma]^rP_k\\
  =&\frac{1}{(\mu_k-\eta)^2}\sum\limits_{s=2}^r(\tilde{R}_{B}(\eta)\Gamma)^{s-1}P_k\Gamma(R_{B}(\eta)\Gamma)^{r-s}P_k+\frac{1}{\mu_k-\eta}(\tilde{R}_{B}(\eta)\Gamma)^rP_k.
 \end{split}
\end{equation*}
The above representation easily follows from the following simple observation: 
let $a:=\frac{P_k}{\mu_k-\eta}\Gamma$ and $b:=\tilde{R}_{B}(\eta)\Gamma.$
Then 
\begin{equation*}
 \begin{split}
  (a+b)^r=&a(a+b)^{r-1}+b(a+b)^{r-1}\\
 =&a(a+b)^{r-1}+ba(a+b)^{r-2}+b^2(a+b)^{r-2}\\
=&a(a+b)^{r-1}+ba(a+b)^{r-2}+b^2a(a+b)^{r-3}+b^3(a+b)^{r-3}\\
=&\ldots=\sum_{s=1}^rb^{s-1}a(a+b)^{r-s}+b^r.
 \end{split}
\end{equation*}
As a result,
\begin{equation}
\label{rep-rep-main}
\begin{split}
 P_k^{\perp}S_k(\Gamma)&P_k=-\sum\limits_{r\geq 2}(-1)^r\frac{1}{2\pi i}\oint_{\gamma_k}\Bigg[\frac{1}{(\mu_k-\eta)^2}\sum\limits_{s=2}^r(\tilde{R}_{B}(\eta)\Gamma)^{s-1}P_k\Gamma(R_{B}(\eta)\Gamma)^{r-s}P_k\\ 
 +&\frac{1}{\mu_k-\eta}(\tilde{R}_{B}(\eta)\Gamma)^rP_k\Bigg]d\eta\\
 \end{split}
\end{equation}
Let $P_k=\sum\limits_{l\in\Delta_k}\theta_l\otimes\theta_l,$ 
where $\{\theta_l,l\in\Delta_k\}$ are orthonormal eigenvectors corresponding to
the eigenvalue $\mu_k.$ Therefore, for any $y\in\mathbb{R}^{m+n}$,
\begin{equation}
\label{r-s}
 \begin{split}
  (\tilde{R}_{B}(\eta)\Gamma)&^{s-1}P_k\Gamma(R_{B}(\eta)\Gamma)^{r-s}P_k y
=\sum\limits_{l\in\Delta_k}(\tilde{R}_{B}(\eta)\Gamma)^{s-1}\theta_l\otimes\theta_l\Gamma(R_{B}(\eta)\Gamma)^{r-s}
P_ky\\
  =&\sum\limits_{l\in\Delta_k}\left<\Gamma(R_{B}(\eta)\Gamma)^{r-s}P_k y,\theta_l\right>(\tilde{R}_{B}(\eta)\Gamma)^{s-2}\tilde{R}_{B}(\eta)\Gamma\theta_l
 \end{split}
\end{equation}
Since $|\left<\Gamma(R_{B}(\eta)\Gamma)^{r-s}P_k y,\theta_l\right>|\leq 
\|\Gamma\|^{r-s+1} \|R_{B}(\eta)\|^{r-s}\|y\|$, we get
\begin{equation*}
 \mathbb{E}|\left<\Gamma(R_{B}(\eta)\Gamma)^{r-s}P_k y,\theta_l\right>|^2
\mathbbm{1}(\|\Gamma\|\leq\delta_{n,m})\leq \delta_{n,m}^{2(r-s+1)}\left(\frac{2}{\bar{g}_k}\right)^{2(r-s)}\|y\|^2.
\end{equation*}
Also, for any $x\in\mathbb{R}^{m+n}$, we have to bound
\begin{equation}
 \label{ineq1}
 \mathbb{E}\left|\left<(\tilde{R}_{B}(\eta)\Gamma)^{s-2}\tilde{R}_{B}(\eta)\Gamma\theta_l,x\right>\right|^2
\mathbbm{1}(\|\Gamma\|\leq\delta_{n,m}).
\end{equation}

In what follows, we need some additional notations. 
Let $X_1^c,\ldots,X_n^c\sim\mathcal{N}(0,\tau^2I_m)$ be the i.i.d. columns of $X$ and $(X_1^r)', \ldots, (X_n^{r})'\sim\mathcal{N}(0,\tau^2I_n)$ 
be its i.i.d. rows (here $I_m$ and $I_n$ are $m\times m$ and $n\times n$ identity matrices). 
For $j=1,\ldots,n$, define the vector $\check{X}_j^c=((X_j^c)',0)'\in {\mathbb R}^{m+n},$ 
representing the $(m+j)$-th column of matrix $\Gamma.$ Similarly, for $i=1,\ldots,m,$ 
$\check{X}_i^r=(0,(X_i^r)')'\in {\mathbb R}^{m+n}$ represents the $i$-th row of $\Gamma$.
With these notations, the following representations of $\Gamma$ holds
$$
\Gamma=\sum_{j=1}^ne_{m+j}^{m+n}\otimes\check{X}_j^c+\sum_{j=1}^n\check{X}_j^c\otimes e_{m+j}^{m+n},
$$ 
$$\Gamma=\sum_{i=1}^m\check{X}_i^r\otimes e_{i}^{m+n}+\sum_{i=1}^m e_{i}^{m+n}\otimes\check{X}_i^r,$$
and, moreover, 
$$
\sum_{j=1}^ne_{m+j}^{m+n}\otimes\check{X}_j^c=
\sum_{i=1}^m\check{X}_i^r\otimes e_{i}^{m+n},\ \ 
\sum_{j=1}^n\check{X}_j^c\otimes e_{m+j}^{m+n}=
\sum_{i=1}^m e_{i}^{m+n}\otimes\check{X}_i^r.
$$
Therefore,
\begin{equation*}
\begin{split}
 \big<(\tilde{R}_{B}(\eta)\Gamma)^{s-2}&\tilde{R}_{B}(\eta)\Gamma\theta_l,x\big>
=\sum\limits_{j=1}^n\left<\check{X}_j^c,\theta_l\right>\big<(\tilde{R}_{B}(\eta)\Gamma)^{s-2}
\tilde{R}_{B}(\eta)e_{m+j}^{m+n},x\big>\\
+&\sum\limits_{j=1}^n\left<e_{m+j}^{m+n},\theta_l\right>
\big<(\tilde{R}_{B}(\eta)\Gamma)^{s-2}\tilde{R}_{B}(\eta)\check{X}_j^c,x\big>=:I_1(x)+I_2(x),
\end{split}
\end{equation*}
and we get 
\begin{equation}
\label{bdRB}
\begin{split}
\mathbb{E}\left|\left<(\tilde{R}_{B}(\eta)\Gamma)^{s-2}
\tilde{R}_{B}(\eta)\Gamma\theta_l,x\right>\right|^2&\mathbbm{1}(\|\Gamma\|\leq\delta_{n,m})\\
\leq&2\mathbb{E}(|I_1(x)|^2+|I_2(x)|^2)\mathbbm{1}(\|\Gamma\|\leq\delta_{n,m}).
\end{split}
\end{equation}

Observe that the random variable 
$(\tilde{R}_{B}(\eta)\Gamma)^{s-2}\tilde{R}_{B}(\eta)$ 
is a function of $\{P_t\check{X}_j^c, t\neq k, j=1,\ldots,n\}.$ Indeed, since $\tilde R_B(\eta)$
is a linear combination of operators $P_t, t\neq k,$ it is easy to see that 
$(\tilde{R}_{B}(\eta)\Gamma)^{s-2}\tilde{R}_{B}(\eta)$ can be represented as a 
linear combination of operators 
$$
(P_{t_1} \Gamma P_{t_2})(P_{t_2}\Gamma P_{t_3})\dots (P_{t_{s-2}}\Gamma P_{t_{s-1}})
$$
with $t_j\neq k$ and with non-random complex coefficients. On the other hand,
$$
P_{t_k}\Gamma P_{t_{k+1}}
=\sum_{j=1}^n P_{t_k}e_{m+j}^{m+n}\otimes P_{t_{k+1}}\check{X}_j^c+
\sum_{j=1}^nP_{t_k}\check{X}_j^c\otimes P_{t_{k+1}}e_{m+j}^{m+n}.
$$
These two facts imply that $(\tilde{R}_{B}(\eta)\Gamma)^{s-2}\tilde{R}_{B}(\eta)$ is a function 
of  $\{P_t\check{X}_j^c, t\neq k, j=1,\ldots,n\}.$ Similarly, it is also a function 
of $\{P_t\check{X}_i^r, t\neq k, i=1,\ldots,m\}.$

It is easy to see that random variables $\{P_k \check X_j^c, j=1,\dots, n\}$ and $\{P_t  \check X_j^c, j=1,\dots, n, t\neq k\}$
are independent. Since they are mean zero normal random variables and $\check X_j^c, j=1,\dots, n$ are independent,
it is enough to check that, for all $j=1,\dots, n,$ $t\neq k,$ $P_k \check X_j^c$ and $P_t \check X_j^c$ are uncorrelated. 
To this end, observe that 
\begin{equation*}
\begin{split}
{\mathbb E}(P_k \check X_j^c\otimes P_t \check X_j^c)=&
P_k {\mathbb E}(\check X_j^c\otimes \check X_j^c)P_t\\
=&
\frac{1}{4}
\Big(
\begin{array}{cc}
 P_k^{uu}&P_k^{uv}\\
P_k^{vu}&P_k^{vv}
\end{array}
\Big)
\Big(
\begin{array}{cc}
I_m&0\\
0&0
\end{array}
\Big)
\Big(
\begin{array}{cc}
 P_t^{uu}&P_t^{uv}\\
P_t^{vu}&P_t^{vv}
\end{array}
\Big)
\\
=&
\frac{1}{4}\Big(
\begin{array}{cc}
P_k^{uu}P_t^{uu}&P_k^{uu}P_t^{uv}\\
P_k^{vu}P_t^{uu}&P_k^{vu}P_t^{uv}
\end{array}
\Big)=
\Big(
\begin{array}{cc}
0&0\\
0&0
\end{array}
\Big),
\end{split}
\end{equation*}
where we used orthogonality relationships (\ref{proj_uv_1}).  
Quite similarly, one can prove independence of $\{P_k\check{X}_i^r, i=1,\dots , m\}$ and $\{P_t\check{X}_i^r, i=1,\dots, m, t\neq k\}.$ 

We will now provide an upper bound on $\mathbb{E}|I_1(x)|^2\mathbbm{1}(\|\Gamma\|\leq\delta_{n,m}).$ 
To this end, define 
\begin{equation*}
\begin{split}
 \omega_j(x)=&\left<(\tilde{R}_{B}(\eta)\Gamma)^{s-2}\tilde{R}_{B}(\eta)e_{m+j}^{m+n},x\right>,\quad j=1,\ldots,n\\
 =&\omega_j^{(1)}(x)+i\omega_j^{(2)}(x)\in {\mathbb C}. 
 \end{split}
\end{equation*}
Let $I_1(x)=\kappa^{(1)}(x)+i\kappa^{(2)}(x)\in {\mathbb C}.$ Then, conditionally on $\{P_t\check{X}_j^c: t\neq k, j=1,\ldots,n\}$,
the random vector $(\kappa^{(1)}(x),\kappa^{(2)}(x))$ has the same distribution as mean zero Gaussian random vector in $\mathbb{R}^2$ with covariance,
\begin{equation*}
 \left(\sum\limits_{j=1}^n\frac{\tau^2}{2}\omega_j^{k_1}(x)\omega_j^{k_2}(x)\right), k_1,k_2=1,2
\end{equation*}
(to check the last claim, it is enough to compute conditional covariance of $(\kappa^{(1)}(x),\kappa^{(2)}(x))$ 
given $\{P_t\check{X}_j^c: t\neq k, j=1,\ldots,n\}$ using the fact that 
$(\tilde{R}_{B}(\eta)\Gamma)^{s-2}\tilde{R}_{B}(\eta)$ 
is a function of $\{P_t\check{X}_j^c, t\neq k, j=1,\ldots,n\}$). 
Therefore,
\begin{equation*}
\begin{split}
{\mathbb E}&\left(|I_1(x)|^2\Big|P_t\check{X}_j^c: t\neq k, j=1,\ldots,n\right)\\
=&
{\mathbb E}\left((\kappa^{(1)}(x))^2+ (\kappa^{(2)}(x))^2\Big|P_t\check{X}_j^c: t\neq k, j=1,\ldots,n\right)
\\
=&\frac{\tau^2}{2}\sum_{j=1}^n \Bigl((\omega_j^{(1)}(x))^2+(\omega_j^{(2)}(x))^2\Bigr)
=\frac{\tau^2}{2} \sum_{j=1}^n |\omega_j(x)|^2.
\end{split}
\end{equation*}
Furthermore, 
\begin{equation*}
 \begin{split}
\sum\limits_{j=1}^n\tau^2&|\omega_j(x)|^2= \tau^2\sum\limits_{j=1}^n|\omega_j(x)|^2\\
=&\tau^2 \sum_{j=1}^n\left|\left<\tilde{R}_{B}(\eta)(\Gamma\tilde{R}_{B}(\eta))^{s-2}x,e_{m+j}^{m+n}\right>\right|^2\\
=&\tau^2\left<\tilde{R}_{B}(\eta)(\Gamma\tilde{R}_{B}(\eta))^{s-2}x, \tilde{R}_{B}(\eta)(\Gamma\tilde{R}_{B}(\eta)\Gamma)^{s-2}x\right>\\
\leq&\tau^2\|\tilde{R}_{B}(\eta)\|^{2(s-1)}\|\Gamma\|^{2(s-2)}\|x\|^2.
 \end{split}
\end{equation*}
Under the assumption $\delta_{n,m}<\frac{\bar{g}_k}{2}$, the following inclusion holds:
$$
\{\|\Gamma\|\leq \delta_{n,m}\}\subset\left\{\sum\limits_{j=1}^n\tau^2|\omega_j(x)|^2\leq \tau^2\left(\frac{2}{\bar{g}_k}\right)^{2(s-1)}\delta_{n,m}^{2(s-2)}\|x\|^2\right\}=:G
$$
Therefore,
\begin{equation}
\label{bdI1}
 \begin{split}
  \mathbb{E}&|I_1(x)|^2\mathbbm{1}(\|\Gamma\|\leq\delta_{n,m})\leq \mathbb{E}|I_1(x)|^2\mathbbm{1}_G
  =\mathbb{E}\mathbb{E}\bigg(|I_1(x)|^2\bigg|P_t\check{X}_j^c, t\neq k, j=1,\ldots,n\bigg)\mathbbm{1}_G\\
  =&\mathbb{E}\mathbb{E}\bigg(\sum\limits_{j=1}^n\tau^2|\omega_j(x)|^2\bigg|P_t\check{X}_j^c, t\neq k, j=1,\ldots,n\bigg)\mathbbm{1}_G
  \leq \tau^2\left(\frac{2}{\bar{g}_k}\right)^{2(s-1)}\delta_{n,m}^{2(s-2)}\|x\|^2.
 \end{split}
\end{equation}
A similar bound holds also for $\mathbb{E}|I_2(x)|^2\mathbbm{1}(\|\Gamma\|\leq\delta_{n,m}):$
\begin{equation}
\label{bdI2}
 \mathbb{E}|I_2(x)|^2\mathbbm{1}(\|\Gamma\|\leq\delta_{n,m})\leq 
 \tau^2\left(\frac{2}{\bar{g}_k}\right)^{2(s-1)}\delta_{n,m}^{2(s-2)}\|x\|^2.
\end{equation}
For the proof, it is enough to observe that 
\begin{eqnarray*}
&
I_2(x)=
\sum\limits_{j=1}^n\left<e_{m+j}^{m+n},\theta_l\right>
\big<(\tilde{R}_{B}(\eta)\Gamma)^{s-2}\tilde{R}_{B}(\eta)\check{X}_j^c,x\big>
\\
&
=
\biggl\langle(\tilde{R}_{B}(\eta)\Gamma)^{s-2}\tilde{R}_{B}(\eta) \biggl(\sum_{j=1}^n\check{X}_j^c\otimes e_{m+j}^{m+n}\biggr) \theta_l, x\biggr\rangle 
\\
&
=\biggl\langle(\tilde{R}_{B}(\eta)\Gamma)^{s-2}\tilde{R}_{B}(\eta) \biggl(\sum_{i=1}^m e_{i}^{m+n}\otimes\check{X}_i^r
\biggr) \theta_l, x\biggr\rangle 
\\
&
=
\sum\limits_{i=1}^m\left<\check X_i^r,\theta_l\right>
\big<(\tilde{R}_{B}(\eta)\Gamma)^{s-2}\tilde{R}_{B}(\eta)e_i^{m+n},x\big>
\end{eqnarray*}
and to repeat the previous conditioning argument (this time, given $\{P_t\check{X}_i^r: t\neq k, i=1,\ldots,m\}$).  

Combining bounds (\ref{bdI1}), (\ref{bdI2}) and (\ref{bdRB}), we get 
$$
\mathbb{E}\left|\left<(\tilde{R}_{B}(\eta)\Gamma)^{s-2}\tilde{R}_{B}(\eta)\Gamma\theta_l,x\right>\right|^2\mathbbm{1}(\|\Gamma\|\leq\delta_{n,m})\leq 2\tau^2\left(\frac{2}{\bar{g}_k}\right)^{2(s-1)}\delta_{n,m}^{2(s-2)}\|x\|^2.
$$
Then, it follows that
\begin{equation*}
 \begin{split}
  \Big|\mathbb{E}&\left<\Gamma(R_{B}(\eta)\Gamma)^{r-s}P_k y,\theta_l\right>\left<(\tilde{R}_{B}(\eta)\Gamma)^{s-2}\tilde{R}_{B}(\eta)\Gamma\theta_l, x\right>\mathbbm{1}(\|\Gamma\|\leq\delta_{n,m})\Big|\\
  \leq&\left(\mathbb{E}\left|\left<\Gamma(R_{B}(\eta)\Gamma)^{r-s}P_k y,\theta_l\right>\right|^2\mathbbm{1}(\|\Gamma\|\leq\delta_{n,m})\right)^{1/2}\\
  \times&\left(\mathbb{E}\left|\left<(\tilde{R}_{B}(\eta)\Gamma)^{s-2}\tilde{R}_{B}(\eta)\Gamma\theta_l, x\right>\right|^2\mathbbm{1}(\|\Gamma\|\leq\delta_{n,m})\right)^{1/2}\\
  \leq&\sqrt{2}\tau\left(\frac{2\delta_{n,m}}{\bar{g}_k}\right)^{r-1}\|x\|\|y\|,
 \end{split}
\end{equation*}
which, taking into account (\ref{r-s}),  implies that
\begin{equation*}
\begin{split}
 \Big|\mathbb{E}\left<(\tilde{R}_{B}(\eta)\Gamma)^{s-1}P_k\Gamma(R_{B}(\eta)\Gamma)^{r-s}P_k y, x\right>&\mathbbm{1}(\|\Gamma\|\leq\delta_{n,m})\Big|\\
 \leq&\sqrt{2}\nu_k\tau\left(\frac{2\delta_{n,m}}{\bar{g}_k}\right)^{r-1}\|x\|\|y\|
\end{split}
 \end{equation*}
Since $(\tilde{R}_{B}(\eta)\Gamma)^rP_k=(\tilde{R}_{B}(\eta)\Gamma)^{r-1}\tilde{R}_B(\eta)\Gamma P_k$, 
it can be proved by a similar argument that
\begin{equation*}
 \left|\mathbb{E}\left<(\tilde{R}_{B}(\eta)\Gamma)^rP_k y, x\right>\mathbbm{1}(\|\Gamma\|\leq\delta_{n,m})\right|\leq \sqrt{2}\nu_k\tau\frac{2}{\bar{g}_k}\left(\frac{2\delta_{n,m}}{\bar{g}_k}\right)^{r-1}\|x\|\|y\|.
\end{equation*}
Therefore, substituting the above bounds in (\ref{rep-rep-main}) and taking into account that $|\mu_k-\eta|=\frac{\bar g_k}{2}, \eta\in 
\gamma_k$ and that the length of the contour of integration $\gamma_k$ is equal to $2\pi \frac{\bar g_k}{2},$ 
we get
\begin{equation*}
 \begin{split}
 \Big|\mathbb{E}&\left< P_k^{\perp}S_k(\Gamma)P_k y, x\right>\mathbbm{1}(\|\Gamma\|\leq\delta_{n,m})\Big|
 \leq 
 \sum\limits_{r\geq 2}\frac{r\bar{g}_k}{2}\left(\frac{2}{\bar{g}_k}\right)^2\sqrt{2}\nu_k\tau\left(\frac{2\delta_{n,m}}{\bar{g}_k}\right)^{r-1}\|x\|\|y\|\\
 =&\frac{2}{\bar{g}_k}\sqrt{2}\nu_k\tau\sum\limits_{r\geq 2}r\left(\frac{2\delta_{n,m}}{\bar{g}_k}\right)^{r-1}\|x\|\|y\|
 \lesssim_\gamma \nu_k\tau \frac{\delta_{n,m}}{\bar{g}_k^2}\|x\|\|y\|,
 \end{split}
\end{equation*}
where we also used the condition $\delta_{n,m}\leq (1-\gamma/2)\frac{\bar g_k}{2}$ implying that $\frac{2\delta_{n,m}}{\bar g_k}\leq 1-\gamma/2.$ Clearly, this implies that 
$$
\Big\|\mathbb{E} P_k^{\perp}S_k(\Gamma)P_k \Big\| \mathbbm{1}(\|\Gamma\|\leq\delta_{n,m})
\lesssim_{\gamma}  \nu_k\tau \frac{\delta_{n,m}}{\bar{g}_k^2} \lesssim_{\gamma} \frac{\nu_k \tau \sqrt{m\vee n}}{\bar g_k^2}.
$$
Furthermore, the same bound, obviously, holds for
$$
\big\|\mathbb{E}\big< P_kS_k(\Gamma)P_k^{\perp}y,x\big>\mathbbm{1}(\|\Gamma\|\leq\delta_{n,m})\big\|
=\big\|\mathbb{E} \big<P_k^{\perp}S_k(\Gamma)P_k x, y\big>\mathbbm{1}(\|\Gamma\|\leq\delta_{n,m})\big\|
$$
and, by similar arguments, it can be demonstrated that it also holds for 
\begin{equation*}
\Big\|\mathbb{E}P_k^{\perp}S_k(\Gamma)P_k^{\perp}\Big\|\mathbbm{1}(\|\Gamma\|\leq\delta_{n,m})
\end{equation*}
(the only different term in this case is $(\tilde{R}_{B}(\eta)\Gamma)^r\tilde{R}_B(\eta),$ 
but, since $\{\mu_t, t\neq k\}$ are outside of the circle $\gamma_k$, it simply leads to 
$\oint_{\gamma_{k}}(\tilde{R}_{B}(\eta)\Gamma)^r\tilde{R}_B(\eta)d\eta=0$). 

It remains to observe that 
\begin{equation*}
\begin{split}
 \Bigl\|\mathbb{E}&\left(P_k^{\perp}S_k(\Gamma)P_k+P_kS_k(\Gamma)P_k^{\perp}+P_k^{\perp}S_k(\Gamma)P_k^{\perp}\right)\mathbbm{1}(\|\Gamma\|>\delta_{n,m})\Bigr\|\\
 \leq&\mathbb{E}\Bigl\|P_k^{\perp}S_k(\Gamma)P_k+P_kS_k(\Gamma)P_k^{\perp}+P_k^{\perp}S_k(\Gamma)P_k^{\perp}\Bigr\|\mathbbm{1}(\|\Gamma\|>\delta_{n,m})\\
 \leq&\mathbb{E}\|S_k(\Gamma)\|\mathbbm{1}(\|\Gamma\|>\delta_{n,m})\\
 \leq&(\mathbb{E}\|S_k(\Gamma)\|^2)^{1/2}
 \mathbb{P}^{1/2}(\|\Gamma\|>\delta_{n,m})\\
 \lesssim&\mathbb{E}^{1/2}\left(\frac{\|\Gamma\|}{\bar{g}_k}\right)^4\mathbb{P}^{1/2}(\|\Gamma\|>\delta_{n,m})
 \lesssim \frac{1}{\sqrt{m\vee n}}\frac{\tau^2(m\vee n)}{\bar{g}_k^2}\lesssim\frac{\tau^2\sqrt{m\vee n}}{\bar{g}_k^2}
 \end{split}
\end{equation*}
and to substitute the above bounds to identity (\ref{rep-rep}) to get that   
$$
\Big\|\mathbb{E}\tilde{P}_k-P_k-P_k {\mathbb E}S_k(\Gamma) P_k\Big\|\lesssim_
{\gamma}\frac{\nu_k\tau^2 \sqrt{m\vee n}}{\bar{g}_k^2},
$$
which implies the claim of the theorem. 
\end{proof}



\begin{proof}[\bf{Proof of Theorem~\ref{thetadevthm}}]
By a simple computation (see Lemma 8 and the derivation of (6.6) in \cite{koltchinskii2014asymptotics}),
the following identity holds
\begin{equation}
\label{thetadevthmineq1}
 \begin{split}
   \big<\tilde{\theta}_{i_k}&-\sqrt{1+b_k}\theta_{i_k},x\big>=\frac{\rho_k(x)}{\sqrt{1+b_k+\rho_k(x)}}\\
 -&\frac{\sqrt{1+b_k}}{\sqrt{1+b_k+\rho_k(x)}\big(\sqrt{1+b_k+\rho_k(x)}+\sqrt{1+b_k}\big)}\rho_k(\theta_{i_k})\left<\theta_{i_k},x\right>,
 \end{split}
\end{equation}
where 
$\rho_k(x):=\big<(\tilde{P}_k-(1+b_k)P_k)\theta_{i_k},x\big>, x\in {\mathbb R}^{m+n}.$
In what follows, assume that $\|x\|=1.$
By the bounds of theorems \ref{hatPrconthm} and \ref{prdevthm},
with probability at least $1-e^{-t}:$
\begin{equation*}
 |\rho_k(x)|\leq D_{\gamma}
 \frac{\tau\sqrt{t}}{\bar{g}_k}
 \Big(\frac{\tau\sqrt{m\vee n}+\tau\sqrt{t}}{\bar{g}_k}+1\Big).
\end{equation*}
The assumption ${\mathbb E}\|X\|\leq (1-\gamma)\frac{\bar g_k}{2}$ implies that 
$\tau\sqrt{m\vee n}\lesssim \bar g_k.$ Therefore, if $t$ satisfies the assumption 
$\frac{\tau \sqrt{t}}{\bar g_k} \leq c_{\gamma}$ for a sufficiently small constant $c_{\gamma}>0,$
then we have $|\rho_k (x)|\leq \gamma/2 .$ By the assumption that $1+b_k\geq \gamma,$
this implies that $1+b_k+\rho_k(x)\geq \gamma/2.$ 
Thus, it easily follows from identity (\ref{thetadevthmineq1}) that with probability at least $1-2e^{-t}$
$$
\biggl|\big<\tilde{\theta}_{i_k}-\sqrt{1+b_k}\theta_{i_k},x\big>\biggr|\lesssim_{\gamma} 
\frac{\tau\sqrt{t}}{\bar{g}_k}
 \Big(\frac{\tau\sqrt{m\vee n}+\tau\sqrt{t}}{\bar{g}_k}+1\Big).
$$

It remains to show that the same bound holds when $\frac{\tau \sqrt{t}}{\bar g_k} >c_{\gamma}.$
In this case, we simply have that 
$$
\biggl|\big<\tilde{\theta}_{i_k}-\sqrt{1+b_k}\theta_{i_k},x\big>\biggr|\leq 
\|\tilde \theta_{i_k}\|+ (1+b_k)\|\theta_{i_k}\|\leq 2 \lesssim_{\gamma} \frac{\tau^2 t}{\bar g_k^2},
$$
which implies the bound of the theorem.

\end{proof}

\begin{proof}[\bf{Proof of Corollary~\ref{thetacor}}] 
By a simple algebra,
\begin{equation*}
 \begin{split}
 |\tilde{b}_k-b_k|=&\Big|\big<\tilde{\theta}_{i_k}^1,\tilde{\theta}_{i_k}^2\big>-(1+b_k)\Big|\leq 
 \Big|\sqrt{1+b_k}\big<\tilde{\theta}_{i_k}^1-\sqrt{1+b_k}\theta_{i_k},\theta_{i_k}\big>\Big|\\
   +&\Big|\sqrt{1+b_k}\big<\tilde{\theta}_{i_k}^2-\sqrt{1+b_k}\theta_{i_k},\theta_{i_k}\big>\Big|+\Big|\big<\tilde{\theta}_{i_k}^1-\sqrt{1+b_k}\theta_{i_k},\tilde{\theta}_{i_k}^2-\sqrt{1+b_k}\theta_{i_k}\big>\Big|.
\end{split}   
\end{equation*}
Corollary~\ref{thetacor}  implies that with probability at least $1-e^{-t}$
\begin{equation*}
\Big|\sqrt{1+b_k}\big<\tilde{\theta}_{i_k}^1-\sqrt{1+b_k}\theta_{i_k},\theta_{i_k}\big>\Big|  
\lesssim_{\gamma} \frac{\tau\sqrt{t}}{\bar{g}_k}\Big[\frac{\tau\sqrt{m\vee n}+\tau\sqrt{t}}{\bar{g}_k}+1\Big],
 \end{equation*}
 where we also used the fact that $1+b_k\in [0,1].$ A similar bound holds with the same probability 
 for 
 $$
 \Big|\sqrt{1+b_k}\big<\tilde{\theta}_{i_k}^2-\sqrt{1+b_k}\theta_{i_k},\theta_{i_k}\big>\Big|.   
$$ 
To control the remaining term 
 $$
 \Big|\big<\tilde{\theta}_{i_k}^1-\sqrt{1+b_k}\theta_{i_k},\tilde{\theta}_{i_k}^2-\sqrt{1+b_k}\theta_{i_k}\big>\Big|, 
 $$
 note that $\tilde{\theta}_{i_k}^1$ and $\tilde{\theta}_{i_k}^2$ are independent. Thus, applying the bound of 
 Theorem~\ref{thetadevthm} conditionally on  $\tilde{\theta}_{i_k}^2,$ we get that with probability at least $1-e^{-t}$
 $$
 \Big|\big<\tilde{\theta}_{i_k}^1-\sqrt{1+b_k}\theta_{i_k},\tilde{\theta}_{i_k}^2-\sqrt{1+b_k}\theta_{i_k}\big>\Big|
 \lesssim_{\gamma} \frac{\tau\sqrt{t}}{\bar{g}_k}\Big[\frac{\tau\sqrt{m\vee n}+\tau\sqrt{t}}{\bar{g}_k}+1\Big]
 \|\tilde{\theta}_{i_k}^2-\sqrt{1+b_k}\theta_{i_k}\|.
 $$
 It remains to observe that 
\begin{equation*}
 \|\tilde{\theta}_{i_k}^2-\sqrt{1+b_k}\theta_{i_k}\|\leq 2
\end{equation*}
to complete the proof of bound (\ref{bkbd}).

Assume that $\|x\|\leq 1.$
Recall that under the assumptions of the corollary, $\tau\sqrt{m\vee n}\lesssim_{\gamma}\bar g_k$ and,
if $\frac{\tau\sqrt{t}}{\bar{g}_k}\leq c_{\gamma}$ for a sufficiently small constant $c_{\gamma,}$
then bound (\ref{bkbd}) implies that $|\tilde b_k-b_k|\leq \gamma/4$ (on the event of probability at least $1-e^{-t}$). Since $1+b_k\geq \gamma/2,$ on the same event we also have $1+\tilde b_k\geq \gamma/4$ implying that $\hat \theta_{i_k}^{(\gamma)}=\frac{\tilde \theta_{i_k}^1}{\sqrt{1+\tilde b_k}}.$ 
Therefore, 
\begin{eqnarray}
\label{last_last}
&
\Big|\big<\hat{\theta}_{i_k}^{(\gamma)}-\theta_{i_k},x\big>\Big|= \frac{1}{\sqrt{1+\tilde b_k}}
\Big|\big<\tilde{\theta}^1_{i_k}-\sqrt{1+\tilde b_k}\theta_{i_k},x\big>\Big|
\\
\nonumber
&
\lesssim_{\gamma}\Big|\big<\tilde{\theta}^1_{i_k}-\sqrt{1+b_k}\theta_{i_k},x\big>\Big|+ \Big|\sqrt{1+b_k}-\sqrt{1+\tilde b_k}\Big|.
\end{eqnarray}
The first term in the right hand side can be bounded using Theorem~\ref{thetadevthm} and, for the second term,
$$
 \Big|\sqrt{1+b_k}-\sqrt{1+\tilde b_k}\Big|= \frac{|\tilde b_k-b_k|}{\sqrt{1+b_k}+\sqrt{1+\tilde b_k}}
 \lesssim_{\gamma} |\tilde b_k-b_k|,
 $$ 
so bound (\ref{bkbd}) can be used. Substituting these bounds in (\ref{last_last}), we derive (\ref{estbd}) in the 
case when $\frac{\tau\sqrt{t}}{\bar{g}_k}\leq c_{\gamma}.$ 

In the opposite case, when  $\frac{\tau\sqrt{t}}{\bar{g}_k}> c_{\gamma},$ we have 
$$
\Big|\big<\hat{\theta}_{i_k}^{(\gamma)}-\theta_{i_k},x\big>\Big|\leq \|\hat{\theta}_{i_k}^{(\gamma)}\|+\|\theta_{i_k}\|
\leq \frac{1}{\sqrt{1+\tilde b_k}\vee \frac{\sqrt{\gamma}}{2}} +1 \leq \frac{2}{\sqrt{\gamma}}+1.
$$
Therefore,
$$
\Big|\big<\hat{\theta}_{i_k}^{(\gamma)}-\theta_{i_k},x\big>\Big|\lesssim_{\gamma}
\frac{\tau \sqrt{t}}{\bar g_k},
$$
which implies (\ref{estbd}) in this case. 

\end{proof}

\bibliographystyle{abbrv}
\bibliography{refer}

\begin{thebibliography}{10}

\bibitem{benaych2012singular}
F.~Benaych-Georges and R.~R. Nadakuditi.
\newblock The singular values and vectors of low rank perturbations of large
  rectangular random matrices.
\newblock {\em Journal of Multivariate Analysis}, 111:120--135, 2012.

\bibitem{eisenstat1994relative}
S.~C. Eisenstat and I.~C. Ipsen.
\newblock Relative perturbation bounds for eigenspaces and singular vector
  subspaces.
\newblock In {\em Proceedings of the Fifth SIAM Conference on Applied Linear
  Algebra}, pages 62--66, 1994.

\bibitem{huang2009spectral}
L.~Huang, D.~Yan, N.~Taft, and M.~I. Jordan.
\newblock Spectral clustering with perturbed data.
\newblock In {\em Advances in Neural Information Processing Systems}, pages
  705--712, 2009.

\bibitem{jin2015fast}
J.~Jin.
\newblock Fast community detection by {SCORE}.
\newblock {\em The Annals of Statistics}, 43(1):57--89, 2015.

\bibitem{kannan2009spectral}
R.~Kannan and S.~Vempala.
\newblock {\em Spectral algorithms}.
\newblock Now Publishers Inc, 2009.

\bibitem{koltchinskii2014asymptotics}
V.~Koltchinskii and K.~Lounici.
\newblock Asymptotics and concentration bounds for spectral projectors of
  sample covariance.
\newblock {\em arXiv preprint arXiv:1408.4643}, 2014.

\bibitem{lei2014consistency}
J.~Lei and A.~Rinaldo.
\newblock Consistency of spectral clustering in stochastic block models.
\newblock {\em The Annals of Statistics}, 43(1):215--237, 2014.

\bibitem{li1998relative}
R.-C. Li.
\newblock Relative perturbation theory: Eigenspace and singular subspace
  variations.
\newblock {\em SIAM Journal on Matrix Analysis and Applications},
  20(2):471--492, 1998.

\bibitem{o2013random}
S.~O'Rourke, V.~Vu, and K.~Wang.
\newblock Random perturbation of low rank matrices: Improving classical bounds.
\newblock {\em arXiv preprint arXiv:1311.2657}, 2013.

\bibitem{rohe2011spectral}
K.~Rohe, S.~Chatterjee, and B.~Yu.
\newblock Spectral clustering and the high-dimensional stochastic block model.
\newblock {\em The Annals of Statistics}, 39(4):1878--1915, 2011.

\bibitem{rudelson2013delocalization}
M.~Rudelson and R.~Vershynin.
\newblock Delocalization of eigenvectors of random matrices with independent
  entries.
\newblock {\em arXiv preprint arXiv:1306.2887}, 2013.

\bibitem{stewart1998perturbation}
G.~W. Stewart.
\newblock Perturbation theory for the singular value decomposition.
\newblock 1998.

\bibitem{vershynin2010introduction}
R.~Vershynin.
\newblock Introduction to the non-asymptotic analysis of random matrices.
\newblock {\em arXiv preprint arXiv:1011.3027}, 2010.

\bibitem{vu2011singular}
V.~Vu.
\newblock Singular vectors under random perturbation.
\newblock {\em Random Structures \& Algorithms}, 39(4):526--538, 2011.

\bibitem{vu2014random}
V.~Vu and K.~Wang.
\newblock Random weighted projections, random quadratic forms and random
  eigenvectors.
\newblock {\em Random Structures \& Algorithms}, 2014.

\bibitem{wang2012singular}
R.~Wang.
\newblock Singular vector perturbation under {Gaussian} noise.
\newblock {\em SIAM Journal on Matrix Analysis and Applications},
  36(1):158--177, 2015.

\end{thebibliography}
\end{document}